\newcommand{\beq}{\begin{equation}}
\newcommand{\R}{{\mathbb R}}
\newcommand{\eeq}{\end{equation}}
\newcommand{\ben}{\begin{eqnarray}}
\newcommand{\een}{\end{eqnarray}}
\newcommand{\beno}{\begin{eqnarray*}}
\newcommand{\eeno}{\end{eqnarray*}}
\newtheorem{thm}{Theorem}[section]
\newtheorem{lem}[thm]{Lemma}
\newtheorem{prop}[thm]{Proposition}
\newtheorem{coro}[thm]{Corollary}
\title{On Entire Solutions of an Elliptic System Modeling Phase Separations}
\author{Henri Berestycki \thanks{\'{E}cole des hautes \'{e}tudes en sciences sociales, CAMS, 54, boulevard Raspail, F - 75006 - Paris, France. Email: hb@ehess.fr}\ , Susanna Terracini \thanks{Dipartimento di Matematica e Applicazioni, Universita degli Studi di Milano-Bicocca, Piazza Ateneo Nuovo 1, 20126, Milano, Italy. Email: susanna.terracini@unimib.com}, \\
Kelei Wang \thanks{School of Mathematics and Statistics, University of Sydney, Sydney, NSW 2006, Australia. Email:keleiw@mail.usyd.edu.au }, Juncheng Wei \thanks{Department of Mathematics, The Chinese University of Hong Kong, Shatin, Hong Kong. Email: wei@math.cuhk.edu.hk} }
\begin{document}

\maketitle

\date{}

\begin{abstract}
We study the qualitative properties of a limiting elliptic system arising in phase separation for Bose-Einstein condensates with multiple states:
\[
\begin{cases} \Delta u=u v^2\ \ \mbox{in} \ \R^n, \\
\Delta v= v u^2 \ \ \mbox{in} \ \R^n, \\
u, v>0\quad \  \mbox{in} \ \R^n.
\end{cases}
\]
When $n=1$, we prove uniqueness of the one-dimensional profile. In dimension $2$,
we prove that stable solutions  with linear growth must be
one-dimensional. Then we construct entire solutions in $\R^2$
with polynomial growth $|x|^d$ for any positive integer $d \geq 1$. For $d\geq 2$, these solutions are not one-dimensional. The construction is also extended to multi-component elliptic
systems.

\end{abstract}

\noindent {\sl Keywords:} {\small  Stable solutions, elliptic systems, phase separations, Almgren's monotonicity formulae.}\

\vskip 0.2cm

\noindent {\sl AMS Subject Classification (2000):} {\small 35B45 .}

\vskip 0.2cm

\section{Introduction and Main Results}
\setcounter{equation}{0}

Consider the  following two-component Gross-Pitaevskii system
\begin{align}
& -\Delta u + \alpha u^3 + \Lambda v^2 u = \lambda_1 u &&\text{in }\Omega, \label{1}\\
& -\Delta v +\beta v^3 + \Lambda u^2 v = \lambda_2 v  &&\text{in }\Omega, \label{2}\\
& u>0,\quad v>0 && \text{in }\Omega, \label{37}\\
& u=0,\quad v=0 && \text{on }\partial\Omega\,, \label{3}\\
& \int_\Omega u^2=N_1,\quad\int_\Omega v^2=N_2\, , \label{301} &&
\end{align}
where $\alpha, \beta, \Lambda >0$ and $\Omega$ is a bounded smooth domain  in $\R^n$. Solutions of (\ref{1})-(\ref{301})
can be regarded as critical points of the energy  functional
\begin{equation}\label{5.1}
E_\Lambda(u,v)=\int_\Omega\,\left(|\nabla u|^2+|\nabla
v|^2\right)+\frac{\alpha}{2}u^4+\frac{\beta}{2}v^4+\frac{\Lambda}{2}
u^2v^2\,,\end{equation} on the space $(u,v)\in H^1_0(\Omega)\times
H^1_0(\Omega)$ with  constraints
\begin{equation}
\label{302}
\int_\Omega u^2 dx=N_1, \int_\Omega v^2 dx=N_2.
\end{equation}

The eigenvalues $\lambda_j$'s are Lagrange multipliers with
respect to~(\ref{302}). Both eigenvalues
$\lambda_j=\lambda_{j,\Lambda}, j=1,2$, and eigenfunctions $u=u_\Lambda,
v=v_\Lambda$ depend on the parameter $\Lambda$. As the parameter $\Lambda$ tends to infinity, the two components tend to separate their supports. In order to investigate the basic rules of phase separations in this system one needs  to understand the asymptotic behavior of $(u_\Lambda, v_\Lambda)$ as $ \Lambda \to +\infty$.

We shall assume that the solutions $(u_\Lambda,
v_\Lambda)$ of (\ref{1})-(\ref{301}) are such that the associated
eigenvalues $\lambda_{j,\Lambda}$'s are uniformly bounded, together with their energies $ E_\Lambda (u_\Lambda, v_\Lambda)$.
Then, as $\Lambda \to +\infty$, there is weak convergence (up to a subsequence) to a limiting profile $(u_\infty, v_\infty)$ which formally satisfies
\begin{equation}
  \label{eq:limit-equation1}
  \begin{cases}
-\Delta u_{\infty} +\alpha u_{\infty}^3 =\lambda_{1,\infty}
u_{\infty} \qquad & \text{in $\Omega_u$}\,,\\
-\Delta v_{\infty} +\beta v_{\infty}^3 =\lambda_{2,\infty}
v_{\infty} \qquad &\text{in $\Omega_v$}\,,\\
\end{cases}
\end{equation} where $\Omega_u=\{x\in\Omega: u_\infty(x)>0\}$ and
$\Omega_v=\{x\in\Omega: v_\infty(x)>0\}$ are positivity domains
composed of finitely disjoint components with positive Lebesgue
measure, and each $\lambda_{j,\infty}$ is the limit of
$\lambda_{j,\Lambda}$'s as $\Lambda\to\infty$ (up to a
subsequence).

There is a large literature about this type of questions. Effective numerical simulations for
(\ref{eq:limit-equation1}) can be
found in~\cite{B}, \cite{BaD} and~\cite{CLLL}.
Chang-Lin-Lin-Lin ~\cite{CLLL} proved  pointwise convergence of
$(u_\Lambda, v_\Lambda)$ away from the interface
$\Gamma\equiv\{x\in\Omega: u_\infty(x)=v_\infty(x)=0\}$. In  Wei-Weth
\cite{ww}  the uniform equicontinuity of $(u_\Lambda,
v_\Lambda)$ is established, while Noris-Tavares-Terracini-Verzini~\cite{NTTV} proved
the uniform-in-$\Lambda$ H\"older continuity of $(u_\Lambda, v_\Lambda)$. The regularity of the nodal set of the
limiting profile has been investigated in \cite{C-L 2, TT2011} and in \cite{DWZ2011}: it turns out that
the limiting pair  $(u_\infty(x),v_\infty(x))$ is the positive and negative pair $(w^+,w^-)$ of a solution of  the equation $-\Delta w+\alpha (w^{+})^3-\beta (w^{-})^3 =\lambda_{1,\infty}w^+-\lambda_{2,\infty}w^-$.

To derive the asymptotic behavior of $(u_\Lambda, v_\Lambda)$
near the interface $\Gamma=\{x\in\Omega:
u_\infty(x)=v_\infty(x)=0\}$, one is led to considering the points
 $x_\Lambda \in \Omega$ such that $ u_\Lambda
(x_\Lambda)=v_\Lambda (x_\Lambda)= m_\Lambda\to 0$ and $x_\Lambda
\to  x_\infty \in \gamma\subset\Omega$ as $\Lambda \to +\infty$
(up to a subsequence). Assuming that
 \begin{equation}
 \label{mainas}
 m_\Lambda^4 \Lambda \to C_0>0,
 \end{equation}
  (without loss of generality we may assume that $ C_0=1$), then, by blowing up, we find the following  nonlinear
elliptic system
\begin{equation}\label{maineqn}
\Delta u= u v^2\,, \quad \Delta v= v u^2\,, \quad u,
v > 0 \quad \mbox{in} \quad \R^n\,.
\end{equation}

Problem (\ref{maineqn}) has been studied in Berestycki-Lin-Wei-Zhao \cite{blwz}, and Noris-Tavares-Terracini-Verzini  \cite{NTTV}. It has been proved in \cite{blwz} that, in the one-dimensional case, (\ref{mainas}) always holds. In addition, the authors showed the existence, symmetry and nondegeneracy of the
solution to one-dimensional limiting system
\begin{equation}
\label{1D}
u^{''}= uv^2, v^{''}=v u^2, u, v>0 \ \mbox{in} \ \R.
\end{equation}

In particular they showed that entire solutions are reflectionally symmetric, i.e., there exists $x_0$ such that $ u(x-x_0)= v(x_0-x)$.  They  also established  a two-dimensional version of the De Giorgi Conjecture in this framework. Namely, under the growth condition
 \begin{equation}
\label{bd1}
u(x)+v(x)\leq C (1+|x|),
\end{equation}
all monotone solution is one dimensional.

On the other hand, in \cite{NTTV}, it was proved that the linear growth is the lowest possible for solutions to (\ref{maineqn}). In other words, if there exists $\alpha \in (0,1)$ such that
\begin{equation}
\label{bd2}
u(x)+v(x)\leq C (1+|x|)^{\alpha},
\end{equation}
then $u, v \equiv 0$.

In this paper we address three problems left open in \cite{blwz}. First, we
 prove the uniqueness of (\ref{1D}) (up to translations and scaling). This answers
 the question stated  in Remark 1.4 of \cite{blwz}. Second,   we prove that the De Giorgi conjecture still holds in the two dimensional case, when we replace the monotonicity assumption by  the stability condition. A third open question  of (\ref{maineqn}) is whether  all solutions to (\ref{maineqn}) necessarily  satisfy the growth bound (\ref{bd1}). We shall answer this question negatively in this paper.

We first study the one-dimensional  problem (\ref{1D}).
 Observe that problem (\ref{1D}) is invariant under the translations $ (u(x), v(x)) \to ( u(x+t), v(x+t)), \forall t \in \R$ and scalings $ (u(x), v(x)) \to ( \lambda u(\lambda x), \lambda  v(\lambda x)), \forall \lambda >0$. The following theorem  classifies all entire solutions to (\ref{1D}).

\begin{thm}
\label{thm0}
The solution to (\ref{1D}) is unique, up to translations and scaling.
\end{thm}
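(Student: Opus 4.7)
The plan is a monotone shooting argument based on the reflectional symmetry from \cite{blwz}. By that result, any entire positive solution $(u,v)$ of (\ref{1D}) satisfies $v(x)=u(-x)$ after translation, so I may restrict to $x\geq 0$ and impose the symmetric data $u(0)=v(0)=a$, $u'(0)=-v'(0)=b$ with $a,b>0$. The scaling invariance $(u,v)(x)\mapsto(\lambda u(\lambda x),\lambda v(\lambda x))$ lets me normalize $a=1$, leaving $b>0$ as the single free parameter; uniqueness then amounts to showing that there is exactly one $b=b_\ast$ whose Cauchy problem yields a globally defined positive solution with linear growth.

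The Hamiltonian $E=\tfrac12((u')^2+(v')^2)-\tfrac12 u^2v^2$ is conserved and equals $b^2-\tfrac12$ at the origin. Matching with the asymptotics $u\sim\alpha x$ and $v,v'\to 0$ as $x\to+\infty$ gives $E=\alpha^2/2$, so $b>1/\sqrt{2}$ and the growth rate $\alpha=\sqrt{2b^2-1}$ is determined by $b$. For each such $b$ the shooting outcome is one of: (i) $v_b$ vanishes at some finite $x_0$; (ii) $v_b$ attains a positive minimum and then grows, which by the convexity $v''=u^2v>0$ forces superlinear growth or finite-time blow-up; or (iii) $v_b$ decreases monotonically to $0$ while $u_b$ grows linearly, the desired separatrix.

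The crux is to show that outcome (iii) is realized at a single value of $b$. I would prove strict monotonicity of the shooting in $b$: for $b_1<b_2$, throughout the common positivity interval, $u_{b_1}<u_{b_2}$ and $v_{b_1}>v_{b_2}$ on $(0,\infty)$. Setting $w=u_{b_2}-u_{b_1}$, $z=v_{b_1}-v_{b_2}$, the pair satisfies $w(0)=z(0)=0$, $w'(0)=z'(0)=b_2-b_1>0$, and the linear system
\[
w''=v_2^2\,w-u_1(v_1+v_2)\,z,\qquad z''=-v_2(u_1+u_2)\,w+u_1^2\,z.
\]
Propagating positivity via a Wronskian argument (using $W_u=u_1u_2'-u_2u_1'$, which satisfies $W_u'=u_1u_2(v_2^2-v_1^2)$, and its counterpart for $v$), combined with conservation of $E$ (which ties $\alpha_i$ to $b_i$ and prevents two entire solutions with distinct growth rates from reconciling a crossing), yields the required monotonicity. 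Outcomes (i) and (ii) then partition $(1/\sqrt{2},\infty)$ into disjoint open sets meeting only at $b_\ast$, giving uniqueness.

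The main obstacle is propagating the positivity of $(w,z)$ globally past a putative first zero. The linear system is of competitive type, so the scalar maximum principle does not apply directly; the rigorous argument hinges on combining the sign-flipped cooperative structure, the Wronskian identities above, and the nondegeneracy of the separatrix solution established in \cite{blwz} (which provides transversality of the shooting at $b_\ast$ to the stable manifold of the equilibrium at $x=+\infty$, where $v$ satisfies the rescaled Hermite-type equation $v''\sim\alpha^2x^2 v$).
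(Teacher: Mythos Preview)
Your shooting strategy is a genuinely different route from the paper's, and the gap lies exactly where you locate it: the global monotonicity of the shooting in $b$. The linearized system for $(w,z)$ is cooperative once written as $-w''+v_2^2\,w=u_1(v_1+v_2)\,z$, $-z''+u_1^2\,z=v_2(u_1+u_2)\,w$, but cooperativity yields maximum principles on bounded intervals with \emph{two-sided} boundary data, not forward positivity from one-sided Cauchy data $w(0)=z(0)=0$, $w'(0)=z'(0)>0$. Your Wronskian $W_u=u_1u_2'-u_2u_1'$ starts positive and satisfies $W_u'=u_1u_2(v_2^2-v_1^2)<0$ as long as the ordering $v_1>v_2$ persists, so it \emph{decreases}; nothing you have written prevents it from crossing zero, which is exactly a crossing of $u_1$ and $u_2$. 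The nondegeneracy result of \cite{blwz} concerns the kernel of the linearization at the separatrix and yields only \emph{local} uniqueness of $b_\ast$ (transversality of the shooting map at a single parameter value); it does not rule out a second separatrix elsewhere in $(1/\sqrt2,\infty)$, nor does it give the interval structure of the outcome sets that your final sentence relies on.

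The paper circumvents this by normalizing the asymptotic slope $a=\lim_{x\to+\infty}u'(x)$ rather than the value at the origin, and then sliding one entire solution against another: for $t$ large the estimate $|u-ax^+|+|v-ax^-|\le A$ forces $u_1(\cdot+t)\ge u_2$, $v_1(\cdot+t)\le v_2$ on a half-line, and the scalar maximum principle (applied to each component separately, using the ordering already obtained on the complementary half-line) extends this to all of $\mathbb R$. One then decreases $t$ to the optimal $t_0$; at that stage the cooperative structure \emph{does} apply because the ordering is global with the correct behaviour at $\pm\infty$. The only remaining difficulty is non-compactness---the infimum of $u_{1,t_k}-u_2$ along a violating sequence $t_k\uparrow t_0$ could run off to infinity. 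The paper handles this with the weight $g(x)=\log(|x|+3)$: it shows that the minima of $(u_{1,t_k}-u_2)/g$ and $(v_2-v_{1,t_k})/g$ must both be attained and must escape to infinity, then plays the two coupled equations at these minima against each other, using $g''/g\sim -|x|^{-2}/\log|x|$ together with the exponential decay of $v$ and linear growth of $u$, to reach a contradiction. That weighted maximum-principle step, not an ODE ordering in the initial slope, is the engine of the proof.
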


 Next,we want to classify the stable solutions in $\R^2$. We recall that a {\em stable} solution $(u, v)$ to (\ref{maineqn}) is such that the linearization is weakly positive definite. That is, it satisfies
\[
\int_{\R^n} [\nabla \varphi|^2+|\nabla \psi |^2 + v^2 \varphi^2+u^2 \psi^2 +4 uv \varphi \psi] \geq 0, \qquad \forall \varphi, \psi \in C_0^\infty (\R^n).
\]

In \cite{blwz}, it was proved that the one-dimensional solution is stable in $\R^n$. Our first result states that the only stable solution in $\R^2$, among those growing at most linearly,  is the one-dimensional family.

\begin{thm}
\label{thm1}
Let $(u,v)$ be a stable solution to (\ref{maineqn}) in $\R^2$. Furthermore, we assume that the growth bound (\ref{bd1}) holds. Then $(u, v)$ is one-dimensional, i.e., there exists $a \in \R^2, |a|=1$ such that $(u, v)= (U (a \cdot x), V (a \cdot x))$ where $(U, V)$ are functions of one variable and satisfies (\ref{1D}).

\end{thm}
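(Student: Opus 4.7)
The strategy is to follow the two-dimensional De Giorgi conjecture argument of Ghoussoub--Gui and Ambrosio--Cabr\'e for the scalar Allen--Cahn equation, adapting it to the system via a vector-valued stability identity combined with the Almgren-type monotonicity formula of \cite{NTTV} and \cite{blwz} to pin down the preferred direction.

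\textbf{Step 1} (Stability identity from the linearized system). For any unit direction $\tau\in S^1$, the pair $(u_\tau,v_\tau):=(\partial_\tau u,\partial_\tau v)$ solves the cooperative linearized system
\[
\Delta u_\tau \,=\, v^2 u_\tau + 2uv\,v_\tau,\qquad \Delta v_\tau \,=\, u^2 v_\tau + 2uv\,u_\tau.
\]
Testing the stability inequality with $(\varphi,\psi)=(\eta u_\tau,-\eta v_\tau)$ for $\eta\in C_c^1(\R^2)$ and using these equations to integrate by parts and cancel the interior quadratic terms $v^2u_\tau^2$, $u^2v_\tau^2$, $4uv\,u_\tau v_\tau$, one arrives at the key inequality
\[
8\int \eta^2\, uv\, u_\tau v_\tau \ \le\ \int |\nabla \eta|^2\,(u_\tau^2+v_\tau^2).
\]

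\textbf{Step 2} (Lipschitz bound). I would establish the uniform estimate $|\nabla u|+|\nabla v|\le C$ on $\R^2$. Since the system is invariant under $(u,v)(x)\mapsto(\lambda u,\lambda v)(\lambda x)$, this follows from a standard blow-up/compactness argument using \eqref{bd1}, with the Almgren monotonicity of $N(r)$ ensuring that no blow-up profile of degree higher than $1$ can arise.

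\textbf{Step 3} (Two-dimensional logarithmic cutoff). In $\R^2$, choose $\eta_R$ to be the logarithmic cutoff supported in $B_{R^2}\setminus B_R$, for which $\int |\nabla \eta_R|^2 \le C/\log R \to 0$. Combining Steps 1 and 2 yields
\[
\int_{\R^2} uv\, u_\tau v_\tau \ \le\ 0, \qquad \forall\,\tau\in S^1.
\]

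\textbf{Step 4} (Blow-down is one-dimensional). By the Almgren monotonicity formula of \cite{NTTV,blwz} combined with \eqref{bd1} and the sublinear Liouville theorem of \cite{NTTV}, the Almgren quotient satisfies $N(\infty)=1$. Subsequential blow-down limits $(R^{-1}u(R\cdot),R^{-1}v(R\cdot))$ are therefore nontrivial one-homogeneous entire solutions; matching homogeneities in $\Delta U_0=U_0 V_0^2$ forces $U_0V_0\equiv 0$, and the surviving harmonic components, being positive one-homogeneous harmonic functions on complementary sectors, necessarily have opening $\pi$, yielding $U_0(x)=(a\cdot x)_+$ and $V_0(x)=(a\cdot x)_-$ for some unit $a\in S^1$.

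\textbf{Step 5} (Conclusion). Apply Step 3 in the direction $\tau=a^\perp$ identified by the blow-down. Combining the one-sided integral bound, the blow-down rigidity, and a Liouville-type argument for the cooperative linearized system governing $(u_{a^\perp},v_{a^\perp})$ (whose off-diagonal coupling $2uv$ is non-negative), conclude $u_{a^\perp}\equiv v_{a^\perp}\equiv 0$. Hence $(u,v)$ depends only on $a\cdot x$, and Theorem~\ref{thm0} then identifies $(u,v)$, up to translation and scaling, with the one-dimensional profile.

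\textbf{Main obstacle.} The deepest point is Step 5: upgrading the one-sided integral estimate $\int uv\,u_\tau v_\tau \le 0$ into the pointwise vanishing of $(u_{a^\perp},v_{a^\perp})$, without the monotonicity hypothesis available in \cite{blwz}. This genuinely requires marrying the two-dimensional stability information from Steps 1--3 with the rigidity of the blow-down from Step 4, since a purely local strong maximum principle is not sufficient. One expects the right vehicle to be a suitable Sternberg--Zumbrun-type ``geometric Poincar\'e'' inequality for the system---identifying a non-negative curvature-like integrand that plays the role of the squared second fundamental form of level sets in the scalar case---whose vanishing is equivalent to one-dimensionality; constructing such a quantity from stability alone is the crux.
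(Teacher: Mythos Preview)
Your Step~1 computation is correct, but the inequality you obtain,
\[
8\int \eta^2\, uv\, u_\tau v_\tau \ \le\ \int |\nabla \eta|^2\,(u_\tau^2+v_\tau^2),
\]
is too weak to close the argument, and the gap you flag in Step~5 is real. The integrand $uv\,u_\tau v_\tau$ has no sign, so passing to the limit only yields $\int_{\R^2} uv\,u_\tau v_\tau\le 0$ (assuming the integral even makes sense), which carries essentially no pointwise information. For the one-dimensional solution itself in its own direction, $u_\tau v_\tau=U'V'<0$ and the integral is strictly negative, so the inequality is already saturated on the expected solution and cannot by itself force $u_{a^\perp}=v_{a^\perp}=0$. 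There is no known system analogue of the Sternberg--Zumbrun curvature term that would make your left-hand side manifestly non-negative; you are hoping for a structural identity that simply is not there.

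The paper takes a different route that avoids this problem entirely. The key idea you are missing is the construction, from stability alone, of a \emph{signed eigenfunction pair}: functions $\varphi>0$, $\psi<0$ on $\R^2$ solving
\[
\Delta\varphi=v^2\varphi+2uv\,\psi-\lambda\varphi,\qquad
\Delta\psi=2uv\,\varphi+u^2\psi-\lambda\psi,
\]
with $\lambda\ge 0$, obtained as limits of first Dirichlet eigenfunctions on balls (the linearized system is cooperative for the pair $(\varphi,-\psi)$, so Harnack and the strong maximum principle apply). One then sets $w_1=u_\xi/\varphi$, $w_2=v_\xi/\psi$ and derives divergence-form equations whose testing against $w_i\eta^2$ produces a \emph{non-negative} left-hand side,
\[
\int \varphi^2|\nabla w_1|^2\eta^2+\psi^2|\nabla w_2|^2\eta^2
\ \le\ C\int (u_\xi^2+v_\xi^2)|\nabla\eta|^2,
\]
after the cross terms collapse thanks to $\varphi\psi<0$. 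This is the correct system analogue of the Ghoussoub--Gui/Ambrosio--Cabr\'e ratio trick: the signed eigenfunction replaces the monotone derivative from \cite{blwz}. The right-hand side is controlled using only the $L^2$ gradient bound $\int_{B_r}|\nabla u|^2+|\nabla v|^2\le Cr^2$ (which follows directly from subharmonicity and linear growth---your pointwise Lipschitz Step~2 is not needed), and the logarithmic cutoff then forces $\nabla w_1\equiv\nabla w_2\equiv 0$. Hence $(u_\xi,v_\xi)=c_\xi(\varphi,\psi)$ for every direction $\xi$, which immediately gives one-dimensionality without any blow-down analysis or appeal to Theorem~\ref{thm asymptotics at infinity}.
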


Our third result shows that there are solutions to (\ref{maineqn})
with polynomial growth $|x|^d$ that are not one dimensional.  The construction depends on  the following harmonic polynomial $\Phi$ of degree
$d$:
$$\Phi:=\mbox{Re}(z^d).$$
Note that $\Phi$ has some dihedral symmetry; indeed, let us take its
$d$ nodal lines $L_1, \cdots, L_d$ and denote the corresponding
reflection with respect to these lines by $T_1,\cdots, T_d$. Then there holds
\begin{equation}\label{reflectional symmetry}
\Phi(T_i z)=-\Phi(z).
\end{equation}
The third result of this paper is the following one.
\begin{thm}\label{main result}
For each positive integer $d \geq 1$, there exists a solution $(u,v)$ to  problem \eqref{maineqn}, satisfying
\begin{enumerate}
\item $u-v>0$ in $\{\Phi>0\}$ and $u-v<0$ in $\{\Phi<0\}$;
\item $u \geq\Phi^+$ and $v\geq\Phi^-$;
\item $\forall i=1,\cdots, d$, $u(T_iz)=v(z)$;
\item $\forall r>0$, the Almgren frequency function satisfies
\begin{equation}
\label{nr} N(r):=\frac{r\int_{B_r(0)}|\nabla u|^2+|\nabla
v|^2+u^2v^2}{\int_{\partial B_r(0)}u^2+v^2}\leq d;
\end{equation}
\item
\begin{equation}\label{nr 2}
 \lim_{r \to +\infty} N(r) =d.
\end{equation}
%\item $\exists C>0$, for all $z\in\mathbb{R}^2$,
%\begin{equation}
%\label{50n}
%u(z)\leq\Phi^+(z)+C(1+|z|)^{\frac{d-1}{2}},
%v(z)\leq\Phi^-(z)+C(1+|z|)^{\frac{d-1}{2}}.
%\end{equation}
\end{enumerate}
\end{thm}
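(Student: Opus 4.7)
The plan is to construct the solution as a limit of minimizers on a sequence of balls $B_R$ in a symmetric class, using Almgren's frequency function as the uniform growth control.

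\textbf{Construction on $B_R$.} For each $R>0$, minimize
\[
E_R(u,v) = \int_{B_R}\bigl(|\nabla u|^2 + |\nabla v|^2 + u^2 v^2\bigr)
\]
over the admissible class $\mathcal A_R$ of pairs $(u,v)\in H^1(B_R)^2$ with $u,v\geq 0$, boundary conditions $u|_{\partial B_R}=\Phi^+$ and $v|_{\partial B_R}=\Phi^-$, dihedral symmetry $v(z)=u(T_iz)$ for every $i$, and obstacle $u\geq \Phi^+$, $v\geq \Phi^-$. The class is nonempty since $(\Phi^+,\Phi^-)\in\mathcal A_R$ (because $\Phi^+\Phi^-\equiv 0$); a minimizer $(u_R,v_R)$ exists by weak $H^1$-compactness and lower semicontinuity. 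Both $E_R$ and $\mathcal A_R$ are invariant under the dihedral action $(u,v)\mapsto(v\circ T_i,\,u\circ T_i)$, so by the principle of symmetric criticality the minimizer satisfies the Euler--Lagrange system \eqref{maineqn} off the contact set. I would then show the contact set has empty interior: in a hypothetical interior piece in $\{\Phi>0\}$ we would have $u_R\equiv \Phi$, hence $0=\Delta\Phi=\Phi v_R^2$, forcing $v_R\equiv 0$ there, which propagates by unique continuation for $\Delta v_R=v_Ru_R^2$ to contradict the non-trivial boundary value of $v_R$. Hence $(u_R,v_R)$ is a genuine solution on $B_R$.

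\textbf{Uniform frequency bound.} Using $(\Phi^+,\Phi^-)$ as a competitor,
\[
E_R(u_R,v_R)\leq E_R(\Phi^+,\Phi^-)=\int_{B_R}|\nabla\Phi|^2 = \pi d\,R^{2d},
\]
while the boundary condition gives $\int_{\partial B_R}(u_R^2+v_R^2)=\int_{\partial B_R}\Phi^2 = \pi R^{2d+1}$, so $N_R(R)\leq d$. Since $(u_R,v_R)$ solves the system, Almgren's monotonicity formula yields $r\mapsto N_R(r)$ nondecreasing, so $N_R(r)\leq d$ for all $0<r\leq R$. Together with the obstacle lower bound $u_R^2+v_R^2\geq\Phi^2$, this yields $c\,r^{2d+1}\leq\int_{\partial B_r}(u_R^2+v_R^2)\leq C\,r^{2d+1}$ uniformly in $R$; combined with the subharmonicity $\Delta u_R\geq 0$, Moser iteration yields $u_R+v_R\leq C(1+|x|^d)$ on every compact set, uniformly in $R$.

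\textbf{Passage to the limit and verification.} Diagonal extraction gives $(u_{R_k},v_{R_k})\to (u,v)$ in $C^{2,\alpha}_{\mathrm{loc}}$, where $(u,v)$ is an entire solution of \eqref{maineqn}. Properties (2) and (3) are inherited pointwise, and (4) follows from continuity of the Almgren quotient under $C^1_{\mathrm{loc}}$-convergence. For (5): $\int_{\partial B_r}(u^2+v^2)\geq\pi r^{2d+1}$ from (2); combined with $N(r)\leq d$ and Almgren monotonicity, the blow-down $\lambda^{-N_\infty}\bigl(u(\lambda\cdot),v(\lambda\cdot)\bigr)$ converges (along subsequences) to a nontrivial homogeneous solution, which the lower bound forces to be of degree exactly $d$, yielding $N_\infty=d$ and hence $\lim_{r\to\infty}N(r)=d$. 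For (1): the function $w:=u-v$ solves $\Delta w=-uvw$, i.e.\ $(-\Delta+uv)w=0$ with $uv\geq 0$; the symmetry (3) gives $w(T_iz)=-w(z)$, so $w\equiv 0$ on each nodal line $L_i$. On each component of $\{\Phi>0\}$, the weak maximum principle for $-\Delta+uv$ applied on $B_r\cap\{\Phi>0\}$, together with the positivity of $w$ at infinity (inherited from the blow-down $w\sim a\Phi$ with $a>0$), yields $w\geq 0$ in $\{\Phi>0\}$; the strong maximum principle then upgrades this to $w>0$. The case $\{\Phi<0\}$ is symmetric.

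The main technical hurdle is the first step, namely ruling out non-trivial interior contact so that the minimizer of the obstacle problem is an actual solution of \eqref{maineqn} (rather than just a variational inequality). A secondary subtlety is establishing the non-triviality of the blow-down limit, which feeds into both the sharp conclusion $N_\infty=d$ in (5) and the strict positivity of $w$ in (1).
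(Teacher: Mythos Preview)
Your overall strategy matches the paper's: minimize $E_R$ in a symmetric class on $B_R$, control growth via the Almgren frequency, and pass to the limit $R\to\infty$. Two steps, however, do not go through as written.

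First, your argument that the constrained minimizer solves the system is flawed. In the interior of the contact set $\{u_R=\Phi\}\cap\{\Phi>0\}$ you assert ``$0=\Delta\Phi=\Phi v_R^2$'', but this presupposes the Euler--Lagrange equation holds there. For an obstacle problem one only has the variational inequality $-\Delta u_R+u_Rv_R^2\geq 0$; substituting $u_R=\Phi$ yields merely $\Phi v_R^2\geq 0$, which is no constraint at all, so no contradiction follows. The paper avoids this issue entirely by using the associated heat flow: the parabolic system preserves the full constraint set (symmetry, the sign condition $u-v\geq 0$ on $\{\Phi>0\}$, and the obstacle), and since it decreases $E_R$ the minimizer must be a fixed point, hence a genuine solution of the elliptic system. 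Properties (1)--(3) on $B_R$ then come from the positive invariance of the flow rather than from an a posteriori maximum-principle argument on $\mathbb R^2$.

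Second --- and this is the gap you seem unaware of --- the uniform upper bound $\int_{\partial B_r}(u_R^2+v_R^2)\leq Cr^{2d+1}$ does \emph{not} follow from $N_R(r)\leq d$ together with the obstacle. The doubling estimate one obtains from Almgren monotonicity reads $H(r_2)/H(r_1)\leq e^d(r_2/r_1)^{2d}$ for $r_1\leq r_2$; combined with $H_R(R)=\pi R^{2d}$ this gives only a \emph{lower} bound $H_R(r)\geq e^{-d}\pi r^{2d}$. An upper bound on $H_R(r)$ independent of $R$ would require $N_R(s)\geq d-o(1)$ on the whole interval $[r,R]$, and nothing in your setup provides that. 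This is in fact the technically hardest part of the construction: the paper establishes it through a separate monotonicity formula $\widehat E(r_2)/\widehat E(r_1)\geq C(r_2/r_1)^{2d}$ for symmetric solutions, whose proof rests on a sharp spectral asymptotic $\mathcal L(d,\Lambda)\geq d^2-C\Lambda^{-1/4}$ for a competing-species eigenvalue problem on $S^1$. Your obstacle does deliver the lower bound $H_R(r)\geq\pi r^{2d}$ for free --- cleaner than the paper's route to non-degeneracy --- but it does nothing for the upper bound, without which there is no compactness and hence no limit to extract.
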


Note that the one-dimensional solution constructed in \cite{blwz} can be viewed as corresponding to the case $d=1$. For $d\geq 2$, the solutions of Theorem \ref{main result} will be obtained by a minimization argument under symmetric variations
$(\varphi,\psi)$ (i.e. satisfying $\varphi\circ T_i=\psi$ for every
reflection $T_i$). The first four claims will be derived from  the
construction. See Theorem \ref{thm existence on bounded set}.

\medskip

Regarding the claim 5, we note that by  Almgren's monotonicity formula,
(see Proposition \ref{monotonocity} below), the Almgren frequency quotient $N(r)$
%$$\frac{r\int_{B_r(0)}|\nabla u|^2+|\nabla v|^2+u^2v^2}{\int_{\partial B_r(0)}u^2+v^2}$$
is increasing in $r$. Hence $ \lim_{r \to +\infty} N(r)$ exists.
To understand the asymptotics at infinity of the solutions,
one way is to study the
%As a (\ref{50n})implies that, taking a suitable
blow-down sequence defined by:
$$(u_R(x), v_R(x)):=(\frac{1}{L(R)}u(Rx)\frac{1}{L(R)}v(Rx)),$$
where $L(R)$ is chosen so that
$$\int_{\partial
B_1(0)}u_R^2+v_R^2=1.$$
 In Section 6, we will prove
\begin{thm}\label{thm asymptotics at infinity}
Let $(u,v)$ be a solution of \eqref{maineqn} such that
\[d:=\lim\limits_{r\rightarrow+\infty}N(r)<+\infty.\]
Then $d$ is a positive integer.
As $R\to\infty$, $(u_R, v_R)$ defined above (up to a subsequence)
converges to $(\Psi^+,\Psi^-)$ uniformly on any compact set of
$\mathbb{R}^N$ where $\Psi$ is a homogeneous harmonic polynomial of
degree $d$. If $d=1$ then $(u,v)$ is asymptotically flat at infinity.
\end{thm}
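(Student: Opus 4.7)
The plan is to run a blow-down analysis driven by the Almgren frequency $N(r)$, using the hypothesis $N(r)\uparrow d<\infty$ as the only quantitative input. Observe first that the rescalings $(u_R,v_R)$ satisfy the rescaled system
\[
\Delta u_R=R^2L(R)^2\, u_R v_R^2,\qquad \Delta v_R=R^2L(R)^2\, v_R u_R^2,
\]
so the analysis splits into three independent issues: compactness of $(u_R,v_R)$, segregation of the limit, and homogeneity/harmonicity of $\Psi:=u_\infty-v_\infty$.

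\textbf{Compactness.} Setting $H(R):=R^{1-n}\int_{\partial B_R}(u^2+v^2)$, the identity $\frac{d}{dR}\log H(R)=\frac{2}{R}N(R)$ (part of Almgren's monotonicity) together with $N(r)\leq d$ yields $H(R)\leq C R^{2d}$ and, by the monotonicity $N(r)\uparrow d$, a matching lower bound $H(R)\geq c R^{2d-\varepsilon}$ for large $R$. Hence $L(R)^2\asymp R^{2d}$ and in particular $R^2 L(R)^2\to\infty$. Using the subharmonicity of $u_R^2+v_R^2$ together with the mean-value inequality, the $L^2$ normalization $\int_{\partial B_1}(u_R^2+v_R^2)=1$ upgrades to an $L^\infty$ bound on each compact set, uniformly in $R$. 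The uniform H\"older estimates of Noris--Tavares--Terracini--Verzini \cite{NTTV}, which are independent of the coupling constant, then allow me to extract a subsequence converging locally uniformly to a pair $(u_\infty,v_\infty)\geq 0$.

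\textbf{Segregation and harmonicity.} Testing the equation against $(u_R+v_R)\chi$ for a cutoff $\chi$ and using the uniform $H^1_{\mathrm{loc}}$ bound gives $R^2L(R)^2\int u_R v_R(u_R+v_R)\chi\leq C$, so $R^2L(R)^2\to\infty$ forces $u_\infty v_\infty\equiv 0$ almost everywhere. To conclude that $\Psi:=u_\infty-v_\infty$ is harmonic I subtract the two equations: $\Delta(u_R-v_R)=R^2L(R)^2 u_Rv_R(v_R-u_R)$, whose right-hand side is uniformly bounded in the measure sense on compact sets. The standard free-boundary argument (as in Caffarelli--Lin and \cite{NTTV}) then shows that any limit $\Psi$ is continuous, $\Psi^+=u_\infty$, $\Psi^-=v_\infty$, and $\Delta\Psi=0$ in $\mathcal{D}'(\mathbb{R}^n)$; this is the step I anticipate to be the main obstacle, since one has to rule out a singular contribution supported on the free boundary $\{\Psi=0\}$ (done via the Alt--Caffarelli--Friedman--type monotonicity already encoded in Almgren's formula).

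\textbf{Homogeneity, integrality and the case $d=1$.} Because $N$ is scale invariant for the system, $N_{(u_R,v_R)}(r)=N_{(u,v)}(Rr)\to d$ for every $r>0$; for the (segregated) limit the coupling term drops and $N_\infty(r)=r\int_{B_r}|\nabla\Psi|^2/\int_{\partial B_r}\Psi^2\equiv d$. The equality case in Almgren's monotonicity (applied to the harmonic function $\Psi$) forces $\Psi$ to be homogeneous of degree $d$. A homogeneous harmonic function on $\mathbb{R}^n$ of degree $d$ is a polynomial, so $d$ must be a nonnegative integer; the Liouville-type result \eqref{bd2} of \cite{NTTV} excludes $d=0$, giving $d\in\mathbb{Z}_{>0}$, and $(u_\infty,v_\infty)=(\Psi^+,\Psi^-)$ as claimed. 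When $d=1$, $\Psi$ is a linear function, $\Psi=a\cdot x$ with $|a|=1$, so every subsequential blow-down limit is one-dimensional; this is the precise meaning of asymptotic flatness, and by a standard connectedness argument (the limit does not depend on the subsequence because the $1$-dimensional profile is rigid up to the choice of $a$, which is determined by $\nabla\Psi$) the full sequence converges.
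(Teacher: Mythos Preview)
Your proposal is correct and follows essentially the same blow-down strategy as the paper's Section~6: compactness of $(u_R,v_R)$ via the doubling bounds coming from $N\leq d$ (Proposition~\ref{doubling property}), segregation and harmonicity of the limit by invoking the strong-competition estimates of \cite{NTTV,DWZ2011,TT2011}, and homogeneity of $\Psi$ from the constancy of the limiting Almgren frequency $N(r;\Psi)\equiv d$. Two minor remarks: the relation you state as an identity, $\tfrac{d}{dR}\log H(R)=\tfrac{2}{R}N(R)$, is in fact only an inequality $\geq$ (the derivative of $H$ carries an extra $u^2v^2$ term, cf.\ \eqref{4.2}), so the upper bound $H(R)\leq CR^{2d}$ really uses the remainder estimate \eqref{eq:remainder} as in Proposition~\ref{doubling property}; and your final ``connectedness'' claim for $d=1$ goes a bit beyond what the theorem asserts, which is only subsequential convergence to a linear $\Psi$ (full-sequence convergence would need an additional argument that the paper does not provide).
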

In particular
this applies to the solutions found by Theorem \ref{main result} to yield the following property
\begin{coro}
Let $(u,v)$ be a solution of \eqref{maineqn} given by Theorem \ref{main result}. Then
$$(u_R(x), v_R(x)):=(\frac{1}{R^d}u(Rx)\frac{1}{R^d}v(Rx))$$
converges uniformly on compact subsets of $\mathbb R^2$  to a multiple of $(\Phi^+,\Phi^-)$, where
$\Phi:=\mbox{Re}(z^d)$.
\end{coro}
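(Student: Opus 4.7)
The plan is to derive the corollary from Theorem \ref{thm asymptotics at infinity} in two stages: first, use the dihedral symmetry of the solution of Theorem \ref{main result} to identify the blow-down limit as a scalar multiple of $\Phi$; second, reconcile the Almgren normalization $L(R)$ from that theorem with the polynomial normalization $R^d$ used in the corollary.

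Since property 5 of Theorem \ref{main result} gives $\lim_{r\to\infty}N(r)=d<\infty$, Theorem \ref{thm asymptotics at infinity} applies: along any sequence $R_k\to\infty$ and up to a subsequence, $\bigl(u(R_k x)/L(R_k),\,v(R_k x)/L(R_k)\bigr)$ converges uniformly on compacta of $\R^2$ to $(\Psi^+,\Psi^-)$ for some nontrivial homogeneous harmonic polynomial $\Psi$ of degree $d$. The reflection identity $u(T_iz)=v(z)$ from property 3 passes to the limit as $\Psi^+(T_iz)=\Psi^-(z)$, and applying $T_i$ again (since $T_i^2=\mathrm{Id}$) gives the companion $\Psi^-(T_iz)=\Psi^+(z)$. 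A sign case-split on $\Psi(z)$ turns these two relations into the pointwise identity $\Psi(T_iz)=-\Psi(z)$ for each $i=1,\dots,d$. In $\R^2$ the space of degree-$d$ homogeneous harmonic polynomials is spanned by $\mathrm{Re}(z^d)$ and $\mathrm{Im}(z^d)$, and a direct angular computation shows that antisymmetry across even a single nodal line of $\Phi=\mathrm{Re}(z^d)$ already forces such a polynomial to be a scalar multiple of $\Phi$. Hence $\Psi=\alpha\Phi$; property 2 ($u\ge\Phi^+$) passes to the limit to give $\alpha>0$, and the $L^2(\partial B_1)$ normalization pins down $\alpha$ uniquely.

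For the normalization upgrade, set $H(r):=\int_{\partial B_r}(u^2+v^2)$, so that $L(R)^2=H(R)/R^{n-1}$. Almgren's monotonicity formula (Proposition \ref{monotonocity}) gives
\[
\frac{d}{dr}\log\frac{H(r)}{r^{n-1+2d}}=\frac{2(N(r)-d)}{r}\le 0
\]
by property 4, so $H(r)/r^{n-1+2d}$ is nonincreasing in $r$. Property 2 integrates to $H(r)\ge c_0\, r^{n-1+2d}$ with $c_0:=\int_{\partial B_1}\Phi^2>0$, so this ratio decreases to a positive limit $c_\infty$, whence $L(R)/R^d\to\sqrt{c_\infty}$. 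Multiplying, the sequence $\bigl(u(Rx)/R^d,\,v(Rx)/R^d\bigr)=(L(R)/R^d)\bigl(u(Rx)/L(R),\,v(Rx)/L(R)\bigr)$ converges, along the chosen subsequence, to the positive multiple $\sqrt{c_\infty}\,\alpha\,(\Phi^+,\Phi^-)$ uniformly on compacta. Since $\alpha$ and $c_\infty$ do not depend on the extracting subsequence, the full family converges as $R\to\infty$.

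The main obstacle is the symmetry reduction in the second paragraph: one must carefully pass the identity $u(T_iz)=v(z)$ through the (a priori only subsequential) blow-down and then exploit the low-dimensional structure of harmonic polynomials in the plane to collapse the two-parameter family onto the one-parameter family spanned by $\Phi$. Once this is in hand, the exchange of normalizations is a short computation using Almgren's monotonicity formula together with the quadratic lower bound $u^2+v^2\ge\Phi^2$.
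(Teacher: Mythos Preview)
Your approach mirrors the paper's: invoke Theorem~\ref{thm asymptotics at infinity}, identify the blow-down limit via the reflection symmetries (property~3), then pass from the $L(R)$ normalization to the $R^d$ normalization. The symmetry reduction you spell out is correct and considerably more explicit than the paper's one-line remark that ``$\Psi\equiv\Phi$ is easily done by exploiting the symmetry conditions''.

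There is, however, one genuine slip in your normalization step. The identity
\[
\frac{d}{dr}\log\frac{H(r)}{r^{n-1+2d}}=\frac{2(N(r)-d)}{r}
\]
is false for this system: from \eqref{4.2} the left side actually equals
\[
\frac{2(N(r)-d)}{r}+\frac{2\int_{B_r}u^2v^2}{\int_{\partial B_r}(u^2+v^2)},
\]
and the extra term is nonnegative, so $H(r)/r^{n-1+2d}$ is \emph{not} monotone as you claim. The repair is short: by the remainder estimate \eqref{eq:remainder} in Proposition~\ref{monotonocity} the extra term is integrable on $(1,\infty)$, so $\log\bigl(H(r)/r^{n-1+2d}\bigr)$ is the sum of a nonincreasing function and a bounded convergent one. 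Combined with your lower bound $H(r)\ge c_0\, r^{n-1+2d}$ coming from property~2 (which is a clean observation, simpler than the paper's detour through Corollary~\ref{existencelimitE}), this gives the existence of the positive limit $c_\infty$ and hence $L(R)/R^d\to\sqrt{c_\infty}$. Alternatively you can just cite Corollary~\ref{existencelimitH} for the existence of the limit and use property~2 for its positivity. With this correction the rest of your argument goes through.
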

\par
Theorem \ref{thm asymptotics at infinity} roughly says that $(u,v)$
is asymptotic to $(\Psi^+,\Psi^-)$ at infinity for some homogeneous harmonic polynomial. The extra information we have in the setting of Theorem \ref{main result} is that $\Psi\equiv\Phi=\mbox{Re}(z^d)$. This can be inferred  from the symmetries of the solution (property $3$ in Theorem \ref{main result}).

For another elliptic system with a similar form,
\begin{equation}
\label{uvnew}
\left\{ \begin{aligned}
 &\Delta u=uv, u>0 \ \mbox{in} \ \R^n,\\
 &\Delta v=vu, v>0 \ \mbox{in} \ \R^n                         \end{aligned} \right.
\end{equation}
the same result has been proved by Conti-Terracini-Verzini in \cite{C-T-V 3}.
In fact, their result hold for any dimension $n\geq 1$ and any
harmonic polynomial function on $\mathbb{R}^n$. Note however that the problem here is different from (\ref{uvnew}). Actually, equation (\ref{uvnew})  can be reduced to a single equation: indeed, the difference $u-v$ is a harmonic function ($\Delta (u-v)=0$) and thus we can write $v= u-\Phi $ where $\Phi$ is a harmonic function. By restricting to certain symmetry classes, then (\ref{uvnew}) can be solved by sub-super solution method. However, this reduction does not work for system (\ref{maineqn}) that we study here.

For the proof of Theorem \ref{main result}, we first construct solutions to (\ref{maineqn})  in any bounded ball $B_R(0)$ satisfying appropriate boundary conditions:
\begin{equation}\label{equation100}
\left\{ \begin{aligned}
 &\Delta u=uv^2, ~~\mbox{in}~~B_R(0),\\
 &\Delta v=vu^2,~~\mbox{in}~~B_R(0), \\
& u=\Phi^+,  v=\Phi^- \ \mbox{ on} \ \partial B_R(0).
                          \end{aligned} \right.
\end{equation}

This is done by variational  method  and using  heat flow. The next natural step is to let
$R\rightarrow+\infty$ and obtain some convergence result. This requires
some uniform (in $R$) upper bound for solutions to
(\ref{equation100}). In order to prove
this, we will exploit a new monotonicity formula for symmetric  functions (Proposition \ref{prop:upperbound}).
We also need to exclude the possibility of
degeneracy,  that is  that the limit could be $0$ or a solution with lower
degree such as a one dimensional solution. To this end, we will give some lower
bound using the Almgren monotonicity formula.

Lastly, we observe that  the same construction works also for  a  system with many
components. Let $d$ be an integer or a half-integer and $2d=hk$ be a
multiple of the number of components $k$, and $G$ denote the
rotation of order $2d$. In this way we prove the following result

\begin{thm}\label{thm:maini}
There exists a positive solution to the system
\begin{equation}\label{eq:system}
\left\{ \begin{aligned}
 &\Delta u_i=u_i\sum_{j\neq i,j=1}^ku_j^2, ~~\mbox{in}~~\mathbb C=\R^2, i=1,\dots, k,\\
  & u_i>0, i=1,\ldots, k,
                          \end{aligned} \right.
  \end{equation}
having the following symmetries (here $\overline{z}$ is the complex conjugate of $z$)
\begin{equation}
\label{eqn2_i}
\begin{aligned}
u_{i}(z)&=u_i(G^hz), \qquad \ &\mbox{ on} \ &\mathbb C\,,i=1,\dots,k,\\
u_i(z)&=u_{i+1}(Gz), \qquad \ &\mbox{ on} \ &\mathbb C\,,i=1,\dots,k,\\
u_{k+1}(z)&=u_1(z),  \ &\mbox{ on} \ &\mathbb C\\
u_{k+2-i}(z)&=u_i(\overline{z}), \qquad \ &\mbox{ on} \ &\mathbb C\,,i=1,\dots,k.\\
\end{aligned}
\end{equation}
Furthermore,
%$$N(r):=\frac{r\int_{B_r(0)}\sum_1^k|\nabla u_{i}|^2+\sum_{i<j}u_{i}^2u_{j}^2}{\int_{\partial B_r(0)}\sum_1^k u_{i}^2}\leq d.$$
\[\lim_{r\to\infty} \dfrac{1}{r^{1+2d}}\int_{\partial B_r(0)}\sum_1^k u_{i}^2=b\in(0,+\infty)\;;\]
and
\[\lim_{r\to\infty} \frac{r\int_{B_r(0)}\sum_1^k |\nabla u_{i}|^2+\sum_{i<j}u_i^2u_j^2}
{\int_{\partial B_r(0)}\sum_1^k u_{i}^2}=d\;.\]
\end{thm}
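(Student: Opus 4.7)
The plan is to adapt the construction of Theorem \ref{main result} to $k$ components, replacing the two-component dihedral class by the class \eqref{eqn2_i}. First I introduce the symmetric harmonic competitor: partition $\mathbb C$ into $2d$ equal open sectors separated by the rays $\{\arg z=j\pi/d:j=0,\dots,2d-1\}$, group them cyclically into $k$ classes of $h=2d/k$ sectors each, and let $\Phi_i\geq 0$ be supported in the $i$-th class of sectors, chosen so that the family $\{\Phi_i\}$ satisfies all symmetries in \eqref{eqn2_i}. A canonical choice is $\Phi_i(z)=\bigl[\mathrm{Re}(e^{-i(i-1)h\pi/d}z^d)\bigr]^+$. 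The $\Phi_i$'s are subharmonic with pairwise disjoint supports and play the role of $(\Phi^+,\Phi^-)$.

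For each $R>0$ I solve the Dirichlet problem on $B_R(0)$ with boundary data $u_i=\Phi_i$ on $\partial B_R$ in the symmetry class \eqref{eqn2_i}, either by minimizing
\[
J_R(u_1,\dots,u_k)=\int_{B_R}\Big(\tfrac12\sum_i|\nabla u_i|^2+\tfrac12\sum_{i<j}u_i^2u_j^2\Big)
\]
over the symmetric class, or by running the parabolic flow starting from the subsolution $(\Phi_1,\dots,\Phi_k)$, as in the proof of Theorem \ref{thm existence on bounded set}. The principle of symmetric criticality yields a weak solution $(u_1^R,\dots,u_k^R)$ of \eqref{eq:system}; the maximum principle within the symmetric class gives $u_i^R\geq\Phi_i$ pointwise and strict positivity in the interior of the $i$-th sector class.

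The uniform-in-$R$ control comes from two monotonicity facts for the Almgren frequency
\[
N^R(r)=\frac{r\int_{B_r}\sum_i|\nabla u_i^R|^2+\sum_{i<j}(u_i^R)^2(u_j^R)^2}{\int_{\partial B_r}\sum_i(u_i^R)^2}.
\]
Standard monotonicity of $N^R$ gives compactness of blow-ups and blow-downs, but the crucial ingredient is the symmetric upper bound $N^R(r)\leq d$, obtained by adapting Proposition \ref{prop:upperbound} to the $k$-component symmetry class: the competitor $(\Phi_1,\dots,\Phi_k)$ has frequency exactly $d$, and the dihedral symmetries of \eqref{eqn2_i} rule out lower-energy configurations with strictly higher frequency. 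This yields the polynomial bound $\sum_i u_i^R\leq C|x|^d$ uniformly in $R$. A matching lower bound on the blow-down limit follows from $u_i^R\geq\Phi_i$ together with Almgren monotonicity: the boundary trace inherits $2d$ non-trivial lobes arranged by the group action, so any blow-down configuration has at least $2d$ positivity components and frequency $\geq d$.

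Passing to a subsequence, standard elliptic estimates produce a limit $(u_1,\dots,u_k)$ solving \eqref{eq:system} on $\mathbb R^2$ with the symmetries \eqref{eqn2_i}, satisfying $N(r)\leq d$ for every $r>0$ and hence $d_\infty:=\lim_{r\to\infty}N(r)=d$. The asymptotic identity $\lim_{r\to\infty}r^{-1-2d}\int_{\partial B_r}\sum_i u_i^2=b\in(0,\infty)$ then follows from the multi-component analogue of Theorem \ref{thm asymptotics at infinity}: the blow-down converges to a homogeneous harmonic configuration of degree $d$ with $k$ positive components imposed by the symmetries, and the $L^2$-normalization on spheres combined with Almgren monotonicity gives the precise growth rate. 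The main obstacle I expect is calibrating the symmetric monotonicity formula so that the frequency is pinned at $d$ (rather than some harmonic of higher degree), and proving non-triviality of the $k$-component blow-down limit on every sector; both reduce, given the symmetry class \eqref{eqn2_i}, to direct extensions of the 2-component arguments underlying Theorem \ref{main result}.
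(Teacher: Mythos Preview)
Your overall strategy matches the paper's: build symmetric solutions on $B_R$ by minimization in the class \eqref{eqn2_i}, control the Almgren quotient, and pass to the limit. But two points are confused, and the second is a genuine gap.

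First, the bound $N^R(r)\le d$ does \emph{not} come from Proposition~\ref{prop:upperbound}. It follows directly from Almgren monotonicity (Proposition~\ref{monotonocity}) together with energy minimality against the competitor: since $(u_i^R)$ minimizes and shares boundary values with $(\Phi_i)$, one has $E_R(u^R)\le E_R(\Phi)$ and $H_R(R)=H_R(\Phi)$, whence $N^R(R)\le d$ and then $N^R(r)\le N^R(R)\le d$ for $r<R$. Proposition~\ref{prop:upperbound} plays the opposite role: via the symmetric eigenvalue estimate (Theorem~\ref{thm:lambda}) it gives the \emph{lower} bound $\widehat E(r_2)/\widehat E(r_1)\ge C(r_2/r_1)^{2d}$, which is what eventually forces $N(r)\to d$ from below (Corollary~\ref{existencelimitE}), not what bounds $N$ from above.

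Second, and more seriously, the inequality $N^R(r)\le d$ does \emph{not} yield a uniform-in-$R$ pointwise bound $\sum_i u_i^R\le C(1+|x|)^d$. It only gives the doubling estimate $H_R(r_2)/H_R(r_1)\le e^d(r_2/r_1)^{2d}$ of Proposition~\ref{doubling property}, a \emph{relative} bound. Anchoring at $r_2=R$ (where $H_R(R)\sim R^{2d}$) produces the uniform \emph{lower} bound $H_R(r)\ge ce^{-d}r^{2d}$, not an upper bound; your pointwise comparison $u_i^R\ge\Phi_i$ (even if established for $k$ components, which the paper does not do) points the same way. Without a uniform upper bound on $H_R(r)$ for fixed $r$ you cannot extract a convergent subsequence. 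The paper closes this gap by a rescaling contradiction: if $H_{R_n}(r)\to\infty$ along some sequence, choose $\lambda_n\to 0$ with $\lambda_n^2 H_n(\lambda_n)=1$, rescale $\tilde u_{i,n}(z)=\lambda_n u_{i,n}(\lambda_n z)$, and pass to a nontrivial symmetric limit on $\mathbb C$ whose Almgren quotient vanishes identically; such a limit would be a nonzero constant in each component, which is incompatible with the system \eqref{eq:system}. This step, together with the correct use of Proposition~\ref{prop:upperbound} to pin $\lim_{r\to\infty}N(r)=d$, is what your outline is missing.
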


The problem of the full classification of solutions to
\eqref{maineqn} is largely open. In view of our results, one can formulate several open questions.

 \medskip

 \noindent
 {\bf Open problem 1.} We recall from \cite{blwz} that it is still an open problem to know in which dimension it is true that  all monotone solution is one-dimensional. A similar  open question is  in which dimension it is true that  all stable solution is one-dimensional.  We refer to \cite{A-C}, \cite{GG}, \cite{dkw}, \cite{pacard}, and \cite{savin} for results of this kind for Allen-Cahn equation.

 \medskip

 \noindent
 {\bf Open problem 2.} Let us recall that in one space
dimension, there exists a unique solution to (\ref{1D}) (up to translations and scalings). Such solutions have linear growth at infinity and, in the Almgren
monotonicity formula, they satisfy
\begin{equation}
\label{2.1n}
\lim\limits_{r\rightarrow+\infty}N(r)=1.
\end{equation}
It is natural to conjecture that, in any space dimension, a solution of (\ref{maineqn}) satisfying (\ref{2.1n}) is actually  one dimensional, that is, there is a unit vector
$a$  such that $(u(x),v(x))=(U (a \cdot x ), V (a \cdot x))$ for
$x \in\mathbb{R}^n$, where $(U,V)$ solves (\ref{1D}). However this result seems to be difficult to obtain at this stage.

 \medskip

 \noindent
 {\bf Open problem 3.} A further step would be to  prove uniqueness  of the (family of) solutions having polynomial asymptotics given by Theorem \ref{main result} in two space dimension. A more challenging question is to classify all solutions with
 \begin{equation}
\label{2.1nn}
\lim\limits_{r\rightarrow+\infty}N(r)=d.
\end{equation}

\medskip

\noindent
{\bf Open problem 4.}  For the Allen-Cahn equation $ \Delta u+u-u^3=0$ in $\R^2$, solutions similar to Theorem \ref{main result} was first constructed in \cite{dfp} for $d=2$ and in \cite{acm}  for $ d\geq 3$. (However all solutions to Allen-Cahn equation are bounded.)  On the other hand, it was also proved in \cite{dkpw} that Allen-Can equation in $\R^2$ admits solutions with multiple fronts. An open question is whether similar result holds for (\ref{maineqn}). Namely, are there solutions to (\ref{maineqn}) such that the set $\{ u=v\}$ contains disjoint multiple curves?

\medskip

\noindent
{\bf Open problem 5.} This question is related to extension of Theorem \ref{main result} to higher dimensions.  We recall that for the Allen-Cahn equation $ \Delta u+u-u^3=0$ in $\R^{2m}$ with $m\geq 2$, saddle-like solutions were constructed in \cite{cabre} by employing properties of Simons cone. Stable solutions to Allen-Cahn equation in $\R^8$ with non planar level set were found in \cite{pacard}, using minimal cones. We conjecture that all these results should have analogues for (\ref{maineqn}).

\section{Uniqueness of solutions in $\R$: Proof of Theorem \ref{thm0}}
\numberwithin{equation}{section}
 \setcounter{equation}{0}

In this section we prove Theorem \ref{thm0}.   Without loss of generality, we assume that
\begin{equation}
 \lim_{x \to +\infty} u(x)= +\infty, \lim_{x \to +\infty} v(x)=0.
\end{equation}
The existence of such entire solutions has been proved in \cite{blwz}.  By symmetry property of solutions to  (\ref{1D}) (Theorem 1.3 of \cite{blwz}), we may consider the following problem
\begin{equation}\label{entire problem}
\left\{ \begin{aligned}
 &u^{''}=uv^2,v^{''}=vu^2, u,v>0~~\text{in}~~\mathbb{R},\\
 &\lim\limits_{x\to+\infty}u^{'}(x)=-\lim\limits_{x\to-\infty}v^{'}(x)=a
                          \end{aligned} \right.
\end{equation}
where $a>0$ is a constant. We now prove that there exists a unique
solution $(u, v)$ to (\ref{entire problem}), up to translations. We
will prove it using the method of moving planes.

First we observe that for any solution $(u,v)$ of \eqref{entire problem}, $u^{''}$ and
$v^{''}$ decay exponentially at infinity. Integration shows that as
$x\to+\infty$, $|u^{'}(x)-a|$ decays exponentially. (See also \cite{blwz}.) This implies the
existence of a positive constant $A$ such that
\begin{equation}\label{uniform deviation}
|u(x)-ax^+|+|v(x)-ax^-|\leq A.
\end{equation}
Moreover, the limits
\[\lim\limits_{x\to+\infty}(u(x)-ax^+),\lim\limits_{x\to-\infty}(v(x)-ax^-)\]
exist.

Now assume $(u_1,v_1)$ and $(u_2,v_2)$ are two solutions of
\eqref{entire problem}. For $t>0$, denote
\[u_{1,t}(x):=u_1(x+t),v_{1,t}(x):=v_1(x+t).\]
We want to prove that there exists an optimal  $t_0$ such that for all $t\geq
t_0$,
\begin{equation}\label{sliding}
u_{1,t}(x)\geq u_2(x),v_{1,t}(x)\leq
v_2(x)~~\text{in}~~\mathbb{R}.
\end{equation}
Then we will show that  when $t=t_0$ these inequalities are identities. This will
imply the uniqueness result.

Without loss of generality, assume $(u_1,v_1)$ and $(u_2,v_2)$
satisfy the estimate \eqref{uniform deviation} with the same
constant $A$.

\medskip

\noindent
{\bf Step 1.} For $t\geq \frac{16A}{a}$ ($A$ as in \eqref{uniform deviation}), \eqref{sliding} holds.

\medskip

Firstly, in the region $\{x\geq -t+\frac{2A}{a}\}$, by \eqref{uniform deviation} we have
\begin{equation}\label{1n}
u_{1,t}(x)\geq a(x+t)-A\geq ax^++A\geq u_2(x);
\end{equation}
while in the region $\{x\leq -t+\frac{2A}{a}\}$, we have
\begin{equation}\label{2n}
v_{1,t}(x)\leq a(x+t)^-+A\leq ax^--A\leq v_2(x).
\end{equation}
On the interval  $\{x<-t+\frac{2A}{a}\}$, we have
\begin{equation}
\label{new1}
\left\{ \begin{aligned}
 &u_{1,t}^{''}=u_{1,t}v_{1,t}^2\leq u_{1,t}v_2^2,\\
 &u_2^{''}=u_2v_2^2.\\
                          \end{aligned} \right.
\end{equation}
With the right boundary conditions
\[u_{1,t}(-t+\frac{2A}{a})\geq u_2(-t+\frac{2A}{a}),
 \lim\limits_{x\to-\infty}u_{1,t}(x)=\lim\limits_{x\to-\infty}u_2(x)=0,\]
 a direct application of the
maximum principle implies
\[\inf_{\{x<-t+\frac{2A}{a}\}}(u_{1,t}-u_2)\geq 0.\]
By the same type of argument also show that
\[\sup_{\{x>-t+\frac{2A}{a} \}}(v_{1,t}-v_2)\leq 0.\]

Therefore, we have shown that for $ t\geq \frac{16A}{a}, u_{1,t}\geq u_2$ and $ v_{1,t}\leq v_2 $.

\medskip

\noindent
{\bf Step 2.} We now decrease the $t$ to an optimal value when (\ref{sliding}) holds
\[t_0=\inf\{t^{'} | \ \text{such that}~~\eqref{sliding}~~\text{holds} ~~\text{for all} ~~ t \geq t^{'} \}.\]
Thus $t_0$ is well defined by Step 1.  Since $ -(u_{1,t_0}-u_2)^{''} +v_{1,t_0}^2  (u_{1,t_0}-u_2) \geq 0,\ -(v_{2}-v_{1,t_0})^{''} +u_{1,t_0}^2  (v_2- v_{1,t_0}) \geq 0,$ by the strong maximum principle, either
\[u_{1,t_0}(x)\equiv u_2(x),v_{1,t_0}(x)\equiv
v_2(x)~~\text{in}~~\mathbb{R},\] or
\begin{equation}
\label{new23}
u_{1,t_0}(x)> u_2(x),v_{1,t_0}(x)<
v_2(x)~~\text{in}~~\mathbb{R}.
\end{equation}

Let us argue by contradiction that (\ref{new23}) holds. By the definition of $t_0$, there
exists a sequence of $t_k<t_0$ such that
$\lim\limits_{k\to+\infty}t_k=t_0$ and  either
\begin{equation}
\label{2.0n}
\inf_{\mathbb{R}}(u_{1,t_k}-u_2)<0,
\end{equation}
or
\[\sup_{\mathbb{R}}(v_{1,t_k}-v_2)>0.\]
Let us only consider  the first case.

Define $w_{1,k}:=u_{1,t_k}-u_2$ and $w_{2,k}:=v_2-v_{1,t_k}$. Direct
calculations show that they satisfy
\begin{equation}\label{2.9}
\left\{ \begin{aligned}
 &-w_{1,k}^{''}+v_{1,t_k}^2w_{1,k}=u_2(v_2+v_{1,t_k})w_{2,k}~~\mbox{in}~~\mathbb{R}, \\
 &-w_{2,k}^{''}+u_{1,t_k}^2w_{2,k}=v_2(u_2+u_{1,t_k})w_{1,k}~~\mbox{in}~~\mathbb{R}.
                          \end{aligned} \right.
\end{equation}

We use the auxiliary function $g(x)=\log(|x|+3)$ as in \cite{cl}. Note that
$$g\geq 1,~~g^{''}<0~~\mbox{in}~~\{x\neq 0\}.$$
Define $\widetilde{w}_{1,k}:=w_{1,k}/g$ and $\widetilde{w}_{2,k}:=w_{2,k}/g$. For $x \not = 0$ we have
\begin{equation}\label{2.10}
\left\{ \begin{aligned}
 &-\widetilde{w}_{1,k}^{''}-2\frac{g^{'}}{g}\widetilde{w}_{1,k}^{'}
 +[v_{1,t_k}^2-\frac{g^{'}}{g}]\widetilde{w}_{1,k}=u_2(v_2+v_{1,t_k})\widetilde{w}_{2,k},
 ~~\mbox{in}~~\mathbb{R}, \\
 &-\widetilde{w}_{2,k}^{''}-2\frac{g^{'}}{g}\widetilde{w}_{2,k}^{'}
 +[u_{1,t_k}^2-\frac{g^{'}}{g}]\widetilde{w}_{2,k}=v_2(u_2+u_{1,t_k})\widetilde{w}_{1,k},~~\mbox{in}~~\mathbb{R}.
                          \end{aligned} \right.
\end{equation}
By definition,  $w_{1,k}$ and $w_{2,k}$ are bounded in $\mathbb{R}$, and hence
\[\widetilde{w}_{1,k},\widetilde{w}_{2,k}\to 0~~\text{as}~~|x|\to\infty.\]
In particular, in view of (\ref{2.0n}), we know that $\inf_{\R} (\widetilde{w}_{1,k})<0$ is attained at some point
$x_{k,1}$.

Note that $|x_{k,1}|$ must be unbounded, for if $ x_{k,1} \to x_{\infty}, t_k\to t_0$, then $ w_{1,k} (x_{k,1}) \to  u_{1,t_0} (x_\infty)- u_2 (x_\infty) =0$. But this violates the assumption (\ref{new23}).

Since $|x_{k,1}|$ is unbounded,  at $x=x_{k,1}$  there holds
$$\widetilde{w}_{1,k}^{''}\geq 0~~\mbox{and}~~\widetilde{w}_{1,k}^{'}=0.$$
Substituting this into the first equation of \eqref{2.10}, we get
\begin{equation}\label{2.11}
[v_{1,t_k}(x_{k,1})^2-\frac{ g^{''}(x_{k,1})}{g(x_{k,1})}]
\widetilde{w}_{1,k}(x_{k,1})\geq
u_2(x_{k,1})(v_2(x_{k,1})+v_{1,t_k}(x_{k,1}))\widetilde{w}_{2,k}(x_{k,1})
\end{equation}
which implies that $\widetilde{w}_{2,k}(x_{k,1})<0$.  Thus we also have
$\inf\limits_{\mathbb{R}}\widetilde{w}_{2,k}<0$. Assume it is attained
at $x_{k,2}$. Same argument as before shows that $|x_{k,2}|$ must also be unbounded.  Similar to \eqref{2.11}, we have
\begin{equation}\label{2.12}
[u_{1,t_k}(x_{k,2})^2-\frac{g^{''}(x_{k,2})}{g(x_{k,2})}]
\widetilde{w}_{2,k} (x_{k,2})\geq
v_2(x_{k,2})(u_2(x_{k,2})+u_{1,t_k}(x_{k,2}))\widetilde{w}_{1,k} (x_{k,2}).
\end{equation}

Observe  that
$$\widetilde{w}_{2,k} (x_{k,2})=\inf\limits_{\mathbb{R}}\widetilde{w}_{2,k} \leq\widetilde{w}_{2, k} (x_{k,1}),
$$
$$\widetilde{w}_{1, k} (x_{k,1})=\inf\limits_{\mathbb{R} }\widetilde{w}_{1,k} \leq\widetilde{w}_{1, k} (x_{k,2}).
$$
Substituting these into \eqref{2.11} and \eqref{2.12}, we obtain
\begin{equation}
\label{2.13n}
\widetilde{w}_{1, k} (x_{k,1})\geq
\frac{u_2(x_{k,1})[v_2(x_{k,1})+v_{1,t_k}(x_{k,1})]}
{v_{1,t_k}(x_{k,1})^2-\frac{g^{''}(x_{k,1})}{g(x_{k,1})}}
\frac{v_2(x_{k,2})[u_2(x_{k,2})+u_{1,t_k}(x_{k,2})]}
{u_{1,t_k}(x_{k,2})^2-\frac{g^{''}(x_{k,2})}{g(x_{k,2})}}
\widetilde{w}_{1,k}(x_{k,1}).
\end{equation}

Since $ \tilde{w}_{1,k} (x_{k,1}) <0$, we conclude from (\ref{2.13n}) that
\begin{equation}
\frac{u_2(x_{k,1})[v_2(x_{k,1})+v_{1,t_k}(x_{k,1})]}
{v_{1,t_k}(x_{k,1})^2-\frac{g^{''}(x_{k,1})}{g(x_{k,1})}}
\frac{v_2(x_{k,2})[u_2(x_{k,2})+u_{1,t_k}(x_{k,2})]}
{u_{1,t_k}(x_{k,2})^2-\frac{g^{''}(x_{k,2})}{g(x_{k,2})}} \geq 1
\end{equation}
where $|x_{k,1}|\to +\infty, |x_{k,2}| \to +\infty$. This is impossible since $ \frac{ g^{''} (x)}{g (x)} \sim -\frac{1}{|x|^2 \log (|x|+3)}$ as $|x| \to +\infty$, and we also use the decaying as well as the linear growth properties of $u$ and $v$ at $\infty$.

We have thus reached  a contradiction, and the proof of Theorem \ref{thm0} is thereby completed.

\section{Stable solutions: Proof of Theorem \ref{thm1}}
\numberwithin{equation}{section}
 \setcounter{equation}{0}

In this section, we prove Theorem \ref{thm1}. The proof follows  an idea from  Berestycki-Caffarelli-Nirenberg \cite{BCN}-see also Ambrosio-Cabr\'{e} \cite{A-C} and Ghoussoub-Gui \cite{GG}.  First, by the stability, we have the following
\begin{lem}
There exist a constant $\lambda\geq 0$ and two functions $\varphi>0$
and $\psi<0$, smoothly defined in $\mathbb{R}^2$ such that
\begin{equation}\label{entire eigenfunction}
\left\{ \begin{aligned}
 &\Delta\varphi=v^2\varphi+2uv\psi-\lambda\varphi,\\
 &\Delta\psi= 2uv\varphi+v^2\psi-\lambda\psi.
                          \end{aligned} \right.
\end{equation}
\end{lem}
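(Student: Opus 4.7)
\noindent The plan is the standard construction of a principal eigenpair for the linearized operator at a stable solution, in the spirit of Berestycki--Caffarelli--Nirenberg and the works \cite{A-C}, \cite{GG} cited above. I would work on the exhausting balls $B_R\subset\R^2$ and let $R\to\infty$. On each $B_R$, the idea is to minimize the Rayleigh quotient associated with the quadratic form
\[
Q_R(\varphi,\psi):=\int_{B_R}\left(|\nabla\varphi|^2+|\nabla\psi|^2+v^2\varphi^2+u^2\psi^2+4uv\varphi\psi\right)
\]
on $H^1_0(B_R)\times H^1_0(B_R)$ under the constraint $\int_{B_R}(\varphi^2+\psi^2)=1$. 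The direct method yields a minimizer $(\varphi_R,\psi_R)$ with Lagrange multiplier $\lambda_R$ equal to the infimum, and stability (applied to compactly supported pairs extended by zero to $\R^2$) gives $\lambda_R\geq 0$; the Euler--Lagrange equations are the system of the lemma on $B_R$ with Dirichlet data.

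To arrange the sign conditions, I would observe that replacing $(\varphi,\psi)$ by $(|\varphi|,-|\psi|)$ does not increase $Q_R$, since
\[
4uv\varphi\psi\ \geq\ -4uv|\varphi||\psi|\ =\ 4uv\,|\varphi|\,(-|\psi|),
\]
while the normalization is preserved. Hence one may take $\varphi_R\geq 0\geq\psi_R$. Rewriting the first Euler--Lagrange equation as
\[
-\Delta\varphi_R+(v^2-\lambda_R)\varphi_R\ =\ -2uv\,\psi_R\ \geq\ 0,
\]
the strong maximum principle forces $\varphi_R>0$ in $B_R$ unless $\varphi_R\equiv 0$; the latter would give $\psi_R\equiv 0$ from the same identity, contradicting the normalization. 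The analogous argument delivers $\psi_R<0$.

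Next, since any admissible pair on $B_R$ extends by zero to $B_{R'}$ whenever $R'\geq R$, the map $R\mapsto\lambda_R$ is non-increasing; combined with $\lambda_R\geq 0$ this gives $\lambda_R\searrow\lambda\geq 0$. After re-normalizing $\varphi_R(0)=1$, I would extract a subsequence converging in $C^2_{\mathrm{loc}}(\R^2)$ to a pair $(\varphi,\psi)$ which solves the limiting system with $\lambda$ and satisfies $\varphi\geq 0$, $\psi\leq 0$. Since $\varphi(0)=1$, $\varphi$ is not identically zero; and the limiting equation $-\Delta\psi+u^2\psi+2uv\varphi=\lambda\psi$ then rules out $\psi\equiv 0$ (otherwise $2uv\varphi\equiv 0$). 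A last application of the strong maximum principle upgrades these inequalities to the desired strict ones.

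The main technical obstacle is to secure uniform local $L^\infty$ bounds on $(\varphi_R,\psi_R)$ that validate $C^2_{\mathrm{loc}}$ compactness from the pointwise normalization $\varphi_R(0)=1$. This is the step where I expect to invoke a vector Harnack inequality for cooperative irreducible elliptic systems: after the change of variable $\widetilde\psi_R:=-\psi_R>0$, the pair $(\varphi_R,\widetilde\psi_R)$ satisfies a linear system with non-positive off-diagonal matrix potential $-2uv$, that is, a cooperative system which is strictly coupled since $uv>0$. Standard vector Harnack estimates for such systems then bound both components on any fixed compact set in terms of $\varphi_R(0)=1$, yielding the uniform bounds required to complete the compactness argument.
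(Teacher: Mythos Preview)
Your proposal is correct and follows essentially the same route as the paper: minimize the Rayleigh quotient on $B_R$, arrange signs via $(\varphi,\psi)\mapsto(|\varphi|,-|\psi|)$, use monotonicity of $\lambda_R$ in $R$, invoke the Harnack inequality for cooperative (fully coupled) systems to get local uniform bounds, and finish with the strong maximum principle. The only cosmetic differences are your normalization $\varphi_R(0)=1$ versus the paper's $|\varphi_R(0)|+|\psi_R(0)|=1$, and that you establish strict positivity already on $B_R$ before passing to the limit, whereas the paper does it after; both choices work.
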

\begin{proof}
For any $R<+\infty$ the stability assumption reads
$$\lambda(R):=\min\limits_{\varphi,\psi\in H_0^1(B_R(0))\setminus\{0\}}
\frac{\int_{B_R(0)}|\nabla\varphi|^2+|\nabla\psi|^2+v^2\varphi^2+u^2\psi^2+4uv\varphi\psi}
{\int_{B_R(0)}\varphi^2+\psi^2}\geq0.$$ It's well known that the
corresponding minimizer is the first eigenfunction. That is, let
$(\varphi_R,\psi_R)$ realizing $\lambda(R)$, then
\begin{equation}\label{first eigenfunction R}
\left\{ \begin{aligned}
 &\Delta\varphi_R= v^2\varphi_R+2uv\psi_R-\lambda(R)\varphi_R, \ \mbox{in} \ B_R (0),\\
 &\Delta\psi_R= 2uv\varphi_R+v^2\psi_R-\lambda(R)\psi_R, \ \mbox{in} \ B_R (0), \\
 & \varphi_R =\psi_R=0 \ \mbox{on} \ \partial B_R (0).
                          \end{aligned} \right.
\end{equation}
By possibly replacing $(\varphi_R,\psi_R)$ with $(|\varphi_R|,-|\psi_R|)$, we
can assume $\varphi_R\geq 0$ and $\psi_R\leq 0$. After a
normalization, we also assume
\begin{equation}\label{normalization condition 2}
|\varphi_R(0)|+|\psi_R(0)|=1.
\end{equation}
 $\lambda(R)$ is
decreasing in $R$, thus uniformly bounded as $R\rightarrow+\infty$.
Let $$\lambda:=\lim\limits_{R\rightarrow+\infty}\lambda(R).$$ The
equation for $\varphi_R$ and $-\psi_R$ (both of them are
nonnegative functions) forms a cooperative system, thus by the Harnack
inequality (\cite{A-G-M} or \cite{BS}), $\varphi_R$ and $\psi_R$ are uniformly
bounded on any compact set of $\mathbb{R}^2$. By letting
$R\rightarrow+\infty$, we can obtain a converging subsequence and the
limit $(\varphi,\psi)$ satisfies \eqref{entire eigenfunction}.
\par
We also have $\varphi\geq 0$ and $\psi\leq 0$ by passing to the
limit. Hence
$$-\Delta\varphi+(v^2-\lambda)\varphi\geq 0.$$
Applying the strong maximum principle, either $\varphi>0$ strictly
or $\varphi\equiv 0$. If $\varphi\equiv 0$, substituting this into
the first equation in \eqref{entire eigenfunction}, we obtain
$\psi\equiv 0$. This contradicts the normalization condition
\eqref{normalization condition 2}. Thus, it holds true that $\varphi>0$ and
similarly $\psi<0$.
\end{proof}
Fix a unit vector $\xi$. Differentiating the equation (\ref{maineqn}) yields the following equation for $(u_{\xi},v_{\xi})$
\begin{equation}\label{linearized equation}
\left\{ \begin{aligned}
 &\Delta u_{\xi}=v^2u_{\xi}+2uvv_{\xi},\\
 &\Delta v_{\xi}=2uvu_{\xi}+v^2v_{\xi}.
                          \end{aligned} \right.
\end{equation}
Let
$$w_1=\frac{u_{\xi}}{\varphi},w_2=\frac{v_{\xi}}{\psi}.$$
Direct calculations using \eqref{entire eigenfunction} and
\eqref{linearized equation} show
\begin{equation}
\left\{ \begin{aligned}
 &\text{div}(\varphi^2\nabla
 w_1)=2uv\varphi\psi(w_2-w_1)+\lambda\varphi^2w_1,\\\nonumber
 &\text{div}(\varphi^2\nabla w_2)=2uv\varphi\psi(w_1-w_2)+\lambda\psi^2w_2.
                          \end{aligned} \right.
\end{equation}
For any $\eta\in C_0^{\infty}(\mathbb{R}^2)$, testing these two
equations with $w_1\eta^2$ and $w_2\eta^2$ respectively, we obtain
\begin{equation}
\left\{ \begin{aligned}
 &-\int\varphi^2|\nabla w_1|^2\eta^2-2\varphi^2w_1\eta\nabla w_1\nabla\eta
 =\int
 2uv\varphi\psi(w_2-w_1)w_1\eta^2+\lambda\varphi^2w_1\eta^2,\\\nonumber
 &-\int\psi^2|\nabla w_2|^2\eta^2-2\psi^2w_2\eta\nabla w_2\nabla\eta
 =\int 2uv\varphi\psi(w_1-w_2)w_2\eta^2+\lambda\psi^2w_2\eta^2.
                          \end{aligned} \right.
\end{equation}
Adding these two and applying the Cauchy-Schwarz inequality, we infer that
\begin{equation}\label{5.6}
\int\varphi^2|\nabla w_1|^2\eta^2+\psi^2|\nabla w_2|^2\eta^2\leq 16
\int\varphi^2w_1^2|\nabla\eta|^2+\psi^2w_2^2|\nabla\eta|^2\leq 16
\int (u_{\xi}^2+v_{\xi}^2)|\nabla\eta|^2.
\end{equation}
Here we have taken away the positive term in the right hand side and used
the fact that
$$2uv\varphi\psi(w_2-w_1)w_1\eta^2+2uv\varphi\psi(w_1-w_2)w_2\eta^2
=-2uv\varphi\psi(w_1-w_2)^2\eta^2\geq0,$$ because $\varphi>0$ and
$\psi<0$.

\medskip

On the other hand, testing  the equation $\Delta u \geq 0$ with $u\eta^2$ ($\eta$ as
above) and integrating by parts, we get
$$\int|\nabla u|^2\eta^2\leq 16\int u^2|\nabla\eta|^2.$$
The same estimate also holds for $v$. For any $r>0$, take
$\eta\equiv 1$ in $B_r(0)$, $\eta\equiv0$ outside $B_{2r}(0)$ and
 $|\nabla\eta|\leq 2/r$. By the linear growth of $u$ and $v$, we obtain
 a constant $C$ such that
\begin{equation}\label{5.7}
\int_{B_r(0)}|\nabla u|^2+|\nabla v|^2\leq Cr^2.
\end{equation}
Now for any $R>0$, in \eqref{5.6}, we take $\eta$ to be
\begin{equation}
\eta(z)= \left\{
\begin{array}{ll}
1, &  x\in B_R(0), \\\nonumber
 0, & x\in B_{R^2}(0)^c,\\\nonumber
 1-\frac{\log(|z|/R)}{\log R}  & x\in B_{R^2}(0)\setminus B_R(0).
\end{array}
\right. \end{equation} With this $\eta$, we infer from (\ref{5.6})
\begin{eqnarray*}
&&\int_{B_R(0)}\varphi^2|\nabla w_1|^2+\psi^2|\nabla w_2|^2\\
&\leq&\frac{C}{(\log R)^2}\int_{B_{R^2}(0)\setminus
B_R(0)}\frac{1}{|z|^2}(|\nabla u|^2+|\nabla v|^2)\\
&\leq &\frac{C}{(\log R)^2}\int_R^{R^2}r^{-2}(\int_{\partial
B_r(0)}|\nabla u|^2+|\nabla
v|^2)dr\\
&=&\frac{C}{(\log
R)^2}\int_R^{R^2}r^{-2}(\frac{d}{dr}\int_{B_r(0)}|\nabla
u|^2+|\nabla
v|^2)dr\\
&=&\frac{C}{(\log R)^2}[r^{-2}\int_{\partial B_r(0)}|\nabla
u|^2+|\nabla v|^2)|_R^{R^2}+2\int_R^{R^2}r^{-3}(\int_{B_r(0)}|\nabla
u|^2+|\nabla v|^2)dr]\\
&\leq& \frac{C}{\log R}.
\end{eqnarray*}
By letting $R\rightarrow+\infty$, we see $\nabla w_1\equiv 0$ and
$\nabla w_2\equiv 0$ in $\mathbb{R}^2$. Thus, there is a constant $c$
such that
$$(u_{\xi},v_{\xi})=c(\varphi,\psi).$$
Because $\xi$ is an arbitrary unit vector, from this we actually know that after
changing the coordinates suitably,
$$u_y\equiv 0,v_y\equiv 0\ \ \text{in}~\mathbb{R}^2.$$
That is, $u$ and $v$ depend on $x$ only and they are one
dimensional.

\section{Existence in bounded balls}
\numberwithin{equation}{section}
 \setcounter{equation}{0}
In this section we first construct a solution $(u,v)$ to the problem
\begin{equation}\label{equation}
\left\{ \begin{aligned}
 &\Delta u=uv^2 ~~\mbox{in}~~B_R(0),\\
 &\Delta v=vu^2 ~~\mbox{in}~~B_R(0),
                          \end{aligned} \right.
\end{equation}
satisfying the boundary condition
\begin{equation}
\label{eqn2}
u=\Phi^+,  v=\Phi^- \ \mbox{ on} \ \partial B_R(0)\subset\mathbb{R}^2.
\end{equation}

More precisely, we prove
\begin{thm}\label{thm existence on bounded set}
There exists a solution $(u_R,v_R)$ to  problem \eqref{equation},
satisfying
\begin{enumerate}
\item $u_R-v_R>0$ in $\{\Phi>0\}$ and $u_R-v_R<0$ in $\{\Phi<0\}$;
\item $u_R\geq\Phi^+$ and $v_R\geq\Phi^-$;
\item $\forall i=1,\cdots, d$, $u_R(T_iz)=v_R(z)$;
\item $\forall r\in(0,R)$,
$$N(r;u_R,v_R):=\frac{r\int_{B_r(0)}|\nabla u_R|^2+|\nabla v_R|^2+u_R^2v_R^2}
{\int_{\partial B_r(0)}u_R^2+v_R^2}\leq d.$$
\end{enumerate}
\end{thm}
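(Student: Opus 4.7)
I would obtain $(u_R,v_R)$ as a constrained minimizer of the natural energy
\[
F(u,v):=\int_{B_R}\tfrac12(|\nabla u|^2+|\nabla v|^2)+\tfrac12 u^2v^2
\]
over the admissible class
\[
\mathcal{A}_R:=\bigl\{(u,v)\in H^1(B_R)^2:\ u|_{\partial B_R}=\Phi^+,\ v|_{\partial B_R}=\Phi^-,\ u(T_iz)=v(z)\ \forall i,\ u\ge\Phi^+,\ v\ge\Phi^-\bigr\}.
\]
Since $(\Phi^+,\Phi^-)\in\mathcal{A}_R$ and each defining condition is closed under weak $H^1$ convergence, $\mathcal{A}_R$ is nonempty and weakly closed. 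Coercivity of the Dirichlet part bounds any minimizing sequence in $H^1\times H^1$; the two-dimensional compact embedding $H^1\hookrightarrow L^4$ lets the interaction $\int u_n^2v_n^2$ pass to the limit, while the Dirichlet term is weakly lower semicontinuous. This produces a minimizer $(u_R,v_R)$, and properties (2) and (3) hold by construction.

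To force property (1), I next perform a symmetric swap: on each positive sector $S^+\subset\{\Phi>0\}$, replace $(u_R,v_R)$ by $(\max(u_R,v_R),\min(u_R,v_R))$, and extend to $\{\Phi<0\}$ via the symmetry rule $u(T_iz)=v(z)$; this lands on $(\min(u_R,v_R),\max(u_R,v_R))$ in the negative sectors. Because pointwise $\min/\max$ preserves $|\nabla u|^2+|\nabla v|^2$ (by Stampacchia's chain rule) and $u^2v^2$ almost everywhere, and respects both the boundary data and the obstacles, this operation yields another minimizer now satisfying $u_R\ge v_R$ on $\{\Phi>0\}$.

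I would then read off the system from the Euler--Lagrange conditions. Nonnegative symmetric test functions give the one-sided variational inequalities $-\Delta u_R+u_Rv_R^2\ge 0$ and $-\Delta v_R+v_Ru_R^2\ge 0$ in $B_R$, with equality on the respective noncontact open sets $\{u_R>\Phi^+\}$ and $\{v_R>\Phi^-\}$. On the interior of the coincidence set $\{u_R=\Phi^+\}\cap\{\Phi>0\}$, $u_R\equiv\Phi$ is harmonic, so $\Delta u_R=0$, and the variational inequality forces $\Phi v_R^2\le 0$, hence $v_R\equiv 0$ there; the PDE is therefore satisfied trivially on this set, and the symmetric argument handles the coincidence set of $v_R$. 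Thus \eqref{equation} holds weakly on all of $B_R$.

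Property (1) now follows from a maximum principle: $w:=u_R-v_R$ satisfies $w\circ T_i=-w$, so $w\equiv 0$ on each $L_i$, and the PDE gives $\Delta w=-u_Rv_Rw$. In each sector $S^+\subset\{\Phi>0\}$, the swap yields $w\ge 0$, hence $-\Delta w=u_Rv_Rw\ge 0$; since $w=\Phi>0$ on the arc $\partial B_R\cap S^+$ and $w=0$ on $L_i\cap\partial S^+$, the strong minimum principle gives $w>0$ in the interior of $S^+$. For property (4), minimality against the competitor $(\Phi^+,\Phi^-)$ yields $\int_{B_R}(|\nabla u_R|^2+|\nabla v_R|^2+u_R^2v_R^2)\le\int_{B_R}|\nabla\Phi|^2$, while the Dirichlet data give $\int_{\partial B_R}(u_R^2+v_R^2)=\int_{\partial B_R}\Phi^2$; a direct computation for $\Phi=\mathrm{Re}(z^d)$ yields $R\int_{B_R}|\nabla\Phi|^2/\int_{\partial B_R}\Phi^2=d$, so $N(R;u_R,v_R)\le d$, which propagates to all $r<R$ by the Almgren monotonicity formula (Proposition~\ref{monotonocity}). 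The main technical obstacle is the coincidence set analysis that promotes the variational inequality to the full PDE, since any spurious obstacle source would break both the Almgren monotonicity and the superharmonicity used for property (1).
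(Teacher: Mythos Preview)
Your overall scheme---constrained minimization followed by the Almgren comparison---parallels the paper's, and your treatment of properties (3) and (4) is essentially the same. The genuine gap is in the step where you upgrade the obstacle-problem variational inequality to the full PDE. On the interior of the coincidence set $\{u_R=\Phi^+\}\cap\{\Phi>0\}$ the obstacle is touched \emph{from above}, so only perturbations with $\varphi\ge 0$ are admissible; the resulting one-sided inequality is $-\Delta u_R+u_Rv_R^2\ge 0$, which on that set (where $\Delta u_R=\Delta\Phi=0$) yields $\Phi\,v_R^2\ge 0$, not $\le 0$ as you wrote. This is vacuous and does not force $v_R\equiv 0$. Nothing then rules out a region where $u_R\equiv\Phi$ while $v_R>0$, and there $\Delta u_R=0\neq u_Rv_R^2$, so the system genuinely fails. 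As you yourself flag, this is fatal downstream: both Almgren monotonicity (Proposition~\ref{monotonocity}) and the superharmonicity of $u_R-v_R$ used for (1) require the exact equations, so (1) and (4) remain unproven.

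The paper sidesteps this by a heat-flow argument rather than direct Euler--Lagrange analysis. One shows that the parabolic system $U_t-\Delta U=-UV^2$, $V_t-\Delta V=-VU^2$ with the given boundary data leaves the constraint class $\mathcal{U}$ (which there includes conditions (1)--(3), with $\le$) positively invariant: symmetry is preserved by uniqueness, and the sign condition $U-V\ge 0$ on each positive sector follows from the scalar maximum principle applied to $(U-V)_t-\Delta(U-V)=UV(U-V)$. Since energy is nonincreasing along the flow, starting at the constrained minimizer forces $(U,V)$ to be stationary, hence a solution of \eqref{equation}; the strict inequalities in (1) and the strict bound $u_R>\Phi^+$ then come from the elliptic strong maximum principle applied a posteriori, so the obstacle is in fact never touched. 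An alternative repair of your route would be to drop the pointwise constraints $u\ge\Phi^+$, $v\ge\Phi^-$ from $\mathcal{A}_R$, minimize only under the symmetry, invoke the principle of symmetric criticality to obtain \eqref{equation} directly, and then recover (1)--(2) from the maximum principle.
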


\begin{proof}

Let us denote $\mathcal U\subset H^1(B_R(0))^2$ the set of pairs
satisfying the boundary condition \eqref{eqn2}, together with
conditions $(1,2,3)$ of the statement of the Theorem (with the
strict inequality $<$ replaced by $\leq$, and so now $\mathcal{U}$ is a
closed set).

 The desired solution will be a minimizer of the energy  functional
$$E_R(u,v):=\int_{B_R(0)}|\nabla u|^2+|\nabla v|^2+u^2v^2$$
over $\mathcal U$. Existence of at least one minimizer follows easily from the direct method of the Calculus of Variations.   To prove that the minimizer also satisfies equation (\ref{equation}), we use the heat flow method.
More precisely, we consider the following parabolic problem
\begin{equation}\label{parabolic equation}
\left\{ \begin{aligned}
 &U_t-\Delta U=-UV^2, ~~\mbox{in}~~[0,+\infty)\times B_R(0),\\
 &V_t-\Delta V=-VU^2,~~\mbox{in}~~[0,+\infty)\times B_R(0),
                          \end{aligned} \right.
\end{equation}
with the boundary conditions $U=\Phi^+$ and $V=\Phi^{-}$ on
$(0,+\infty)\times \partial B_R(0)$ and initial conditions in
$\mathcal U$.

\medskip

By the standard  parabolic theory, there exists a unique local solution
$(U,V)$. Then by the maximum principle, $0\leq U\leq
\sup_{B_R(0)}\Phi^+, \ \ 0\leq V \leq \sup_{B_R(0)} \Phi^{-}$,
 hence the solution can be extended to a
global one, for all $t\in(0,+\infty)$. By noting the energy inequality
\begin{equation}
\label{ert}
\frac{d}{dt}E_R(U(t),V(t))=-\int_{B_R(0)}|\frac{\partial U}{\partial t}|^2
+|\frac{\partial V}{\partial t}|^2
\end{equation}
and the fact that $E_R\geq 0$, standard parabolic theory implies that for any sequence $t_i\to+\infty$, there
exists a subsequence of $t_i$ such that $(U(t_i),V(t_i))$ converges to
a solution $(u,v)$ of \eqref{equation}.

\medskip

Next we show that $\mathcal U$ is positively invariant by the parabolic flow.
First of all, by the symmetry of initial and boundary data, $(V(t,T_iz),U(t,T_iz))$ is also a solution to the problem
\eqref{parabolic equation}. By the uniqueness of solutions to the
parabolic system \eqref{parabolic equation}, $(U,V)$ inherits the
symmetry of $(\Phi^+,\Phi^-)$. That is, for all $t\in[0,+\infty)$
and $i=1,\cdots, d$,
$$U(t,z)=V(t,T_iz).$$  %=U(t,x,-y)=V(t,x,-y)
This implies
$$U-V=0~~\mbox{on}~~\{\Phi=0\}.$$
Thus, in the open set $D_R:=B_R(0)\cap\{\Phi>0\}$, we have, for any initial datum $(u_0,v_0)\in\mathcal U$,
\begin{equation}\label{eq:difference}
\left\{ \begin{aligned}
 &(U-V)_t-\Delta (U-V)=UV(U-V), ~~\mbox{in}~~[0,+\infty)\times D_R(0),\\
 &U-V\geq 0,~~\mbox{on}~~[0,+\infty)\times \partial D_R(0),\\
 &U-V\geq 0,~~\mbox{on}~~\{0\}\times D_R(0).
                          \end{aligned} \right.
\end{equation}
The strong maximum principle implies $U-V>0$ in $(0,+\infty)\times
D_R(0)$. By letting $t\to+\infty$, we obtain that the limit satisfies
\begin{equation}\label{4.10}
u-v\geq 0~~\mbox{in}~~D_R(0).
\end{equation}
$(u,v)$ also has the symmetry, $\forall i=1,\cdots, d$
$$u (T_i z)=v (z).$$

\medskip

 Similar to \eqref{eq:difference}, noting \eqref{4.10}, we have
\begin{equation}
\left\{ \begin{aligned}
 &-\Delta (u-v)\geq 0, ~~\mbox{in}~~D_R(0),\\
 &u-v=\Phi^+,~~\mbox{on}~~\partial D_R(0).
                          \end{aligned} \right.
\end{equation}
Comparing with $\Phi^+$ on $D_R(0)$, we obtain
\begin{equation}\label{2m}
u-v>\Phi^+>0,~~\mbox{in}~~D_R(0).
\end{equation}

 Because $u>0$ and $v>0$ in $B_R(0)$, we in
fact have
\begin{equation}\label{3n}
u>\Phi^+,~~\mbox{in}~~B_R(0).
\end{equation}
In conclusion, $(u,v)$ satisfies conditions $(1,2,3)$ in the
statement of the theorem.

Let $(u_R, v_R)$ be a minimizer of $E_{R}$ over ${\mathcal U}$.  Now we consider the parabolic equation (\ref{parabolic equation}) with the initial condition
\begin{equation}
U(x, t)= u_R (x), V (x, t) = v_R (x).
\end{equation}
By (\ref{ert}), we deduce that
$$ E_R (u_R, v_R) \leq  E_{R} (U, V) \leq E_R (u_R, v_R) $$
and hence $ (U(x, t), V(x, t) )\equiv (u_R(x), v_R (x))$ for all $ t \geq 0$. By the arguments above, we see that $ (u_R, v_R)$ satisfies (\ref{equation})and  conditions $(1,2,3)$ in the statement of the theorem.

In order to prove (4), we firstly note that, as $(u_R,v_R)$ minimizes the energy and $(\Phi^+,\Phi^-)\in\mathcal U$, there holds
\[\int_{B_R(0)}|\nabla u_R|^2+|\nabla v_R|^2+u_R^2v_R^2\leq
\int_{B_R(0)}|\nabla \Phi|^2.
\]
Now by the Almgren
monotonicity formula (Proposition \ref{monotonocity} below) and the boundary conditions, $\forall
r\in(0,R)$, we derive
$$N(r;u_R,v_R)\leq N(R;u_R,v_R)\leq \frac{R\int_{B_R(0)}|\nabla\Phi|^2}{\int_{\partial B_R(0)}|\Phi|^2}=d.$$
This completes the proof of Theorem \ref{thm existence on bounded set}.

\end{proof}
%%%%%%%%%%%%%%%%%%%%%%%%%%%%%%%%%%%%%%%%%%%%%

%%%%%%%%%%%%%%%%%%%%%%%%%%%%%%%%%%%%%%%%%%%%

Let us now turn to the system with many components.
In a similar way we shall prove the existence on bounded sets.  Let $d$ be an integer or a
half-integer and $2d=hk$ be a multiple of the number of components
$k$, and $G$ denote the rotation of order $2d$. Take the fundamental
domain $F$ of the rotations group of degree $2d$, that is
$F=\{z\in\mathbb{C}\;:\;
\theta=\mbox{arg}(z)\in(-\pi/{2d},\pi/{2d})\}$.

\begin{equation}
\Psi(z)=\begin{cases}
r^{d}\cos(d\theta)\qquad&\text{if $z\in \cup _{i=0}^{h-1}G^{ik}(F)$,}\\
0 & \text{otherwise in $\mathbb C$.}
\end{cases}
\end{equation}
Note that $\Psi(z)$ is positive whenever it is not zero. Next we construct a solution $(u_1,\dots,u_k)$ to the system
\begin{equation}\label{equation_i}
 \Delta u_i=u_i\sum_{j\neq i,j=1}^ku_j^2, ~~\mbox{in}~~B_R(0), i=1,\dots, k
\end{equation}
satisfying the symmetry and boundary condition (here $\overline{z}$ is the complex conjugate of $z$)
\begin{equation}
\label{eqn2_in}
\left\{\begin{array}{l}
u_{i}(z)= u_i(G^hz), \qquad \ \ \ \ \mbox{ on} \ B_R(0)\,,i=1,\dots,k,\\
u_i (z)= u_{i+1}(Gz), \qquad \ \ \mbox{ on} \ B_R(0)\,,i=1,\dots,k,\\
u_{k+2-i}(z)= u_i(\overline{z}), \qquad \ \mbox{ on} \ B_R(0)\,,i=1,\dots,k,\\
u_{k+1}(z)= u_1(z),  \ \ \ \ \ \ \ \ \ \mbox{ on} \ B_R(0),
\end{array}
\right.
\end{equation}
\begin{equation}
\label{eqn2_ibc}
u_{i+1}(z)=\Psi(G^i(z)),  \qquad \mbox{ on} \ \partial B_R(0)\,,i=0,\dots,k-1.
\end{equation}

More precisely, we prove the following.
\begin{thm}\label{thm existence on bounded seti}
For every $R>0$, there exists a solution $(u_{1,R},\dots,u_{k,R})$ to the system \eqref{equation_i} with symmetries \eqref{eqn2_in} and boundary conditions \eqref{eqn2_ibc},
satisfying,
$$N(r):=\frac{r\int_{B_r(0)}\sum_1^k|\nabla u_{i,R}|^2+\sum_{i<j}u_{i,R}^2u_{j,R}^2}
{\int_{\partial B_r(0)}\sum_1^k u_{i,R}^2}\leq d, \ \forall r\in(0,R).$$
\end{thm}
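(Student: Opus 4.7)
The plan is to mirror the proof of Theorem \ref{thm existence on bounded set}: produce $(u_{1,R},\dots,u_{k,R})$ as an energy minimizer over a symmetry-constrained admissible class, deduce the elliptic system via a parabolic flow argument, and control the Almgren frequency by comparison with an explicit harmonic competitor built from $\Psi$.

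First I would introduce $\mathcal U\subset H^1(B_R(0))^k$ as the class of $k$-tuples $(u_1,\dots,u_k)$ of nonnegative functions satisfying the symmetries \eqref{eqn2_in} and the boundary conditions \eqref{eqn2_ibc}. The canonical competitor $W=(W_1,\dots,W_k)$ with $W_i(z):=\Psi(G^{i-1}z)$ belongs to $\mathcal U$: by the fundamental-domain construction of $\Psi$, its invariance under the rotation $G^2$, and under the reflection $z\mapsto\bar z$, the supports $\{W_i>0\}$ are pairwise disjoint and all required symmetries and boundary values hold. Hence $\mathcal U$ is nonempty and weakly closed in $H^1$. Minimizing
\[
E_R(u_1,\dots,u_k):=\int_{B_R(0)}\Bigl(\sum_{i=1}^k|\nabla u_i|^2+\sum_{i<j}u_i^2u_j^2\Bigr)
\]
over $\mathcal U$ by the direct method yields $(u_{1,R},\dots,u_{k,R})$ with
\[
E_R(u_{1,R},\dots,u_{k,R})\le E_R(W)=k\int_{B_R(0)}|\nabla\Psi|^2,
\]
the coupling term vanishing on $W$ because of the disjoint supports.

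To show that the minimizer actually solves \eqref{equation_i}, I would run the gradient flow
\[
\partial_tU_i-\Delta U_i=-U_i\sum_{j\ne i}U_j^2,\qquad U_i|_{\partial B_R}=\Psi(G^{i-1}\cdot),\qquad U_i|_{t=0}=u_{i,R},
\]
which is globally well-posed with $0\le U_i\le\|\Psi\|_\infty$ by the maximum principle. Positive invariance of $\mathcal U$ follows by applying uniqueness of the parabolic system to each symmetrized tuple: invariance under $G^h$, under $G$ combined with the cyclic index shift $i\mapsto i+1$, and under $z\mapsto\bar z$ combined with the involution $i\mapsto k+2-i$, each of which produces a solution with the same initial and boundary data. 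The energy identity $\frac{d}{dt}E_R(U(t))=-\int_{B_R}\sum_i|\partial_tU_i|^2\le 0$ combined with the minimality of the initial datum forces the flow to be stationary, so $(u_{1,R},\dots,u_{k,R})$ satisfies \eqref{equation_i}.

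For the frequency bound, each $W_i$ is homogeneous of degree $d$ and harmonic on the interior of its support, vanishing on the two radial sides of each sector. A direct integration by parts in polar coordinates then gives $R\int_{B_R}|\nabla W_i|^2=d\int_{\partial B_R}W_i^2$, and summing in $i$ yields
\[
\frac{R\int_{B_R}\sum_i|\nabla W_i|^2}{\int_{\partial B_R}\sum_iW_i^2}=d.
\]
Because the boundary trace of $(u_{i,R})$ equals that of $W$, the denominator $\int_{\partial B_R}\sum_iu_{i,R}^2$ equals $\int_{\partial B_R}\sum_iW_i^2$; combined with the energy inequality above, this gives $N(R;u_R)\le d$. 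The multi-component Almgren monotonicity formula (Proposition \ref{monotonocity}) then propagates the bound inward: $N(r;u_R)\le N(R;u_R)\le d$ for every $r\in(0,R)$. The main obstacle is verifying positive invariance of $\mathcal U$ along the heat flow when several discrete group symmetries must be transported simultaneously; the payoff of the fundamental-domain definition of $\Psi$ is precisely that boundary data, competitor, and full symmetry group are mutually compatible, so uniqueness of the parabolic problem handles each symmetry separately.
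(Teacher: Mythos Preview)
Your proposal is correct and follows essentially the same approach as the paper: minimize the energy over the symmetry-constrained class $\mathcal U$, use the associated heat flow (and uniqueness of the parabolic problem) to show that the minimizer actually solves \eqref{equation_i}, and then bound $N(R)$ by comparison with the explicit competitor $W_i=\Psi\circ G^{i-1}$ together with the multi-component Almgren monotonicity formula. The paper's argument is terser but structurally identical; your added details on the integration-by-parts computation of the frequency of $W$ and on transporting each discrete symmetry separately along the flow simply flesh out what the paper leaves implicit.
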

\begin{proof}
Let us denote by $\mathcal U\subset H^1(B_R(0))^k$ the set of pairs satisfying the symmetry and boundary condition \eqref{eqn2_in}, \eqref{eqn2_ibc}. The desired solution will be the minimizer of the energy  functional
$$\int_{B_r(0)}\sum_1^k|\nabla u_{i,R}|^2+\sum_{i<j}u_{i,R}^2u_{j,R}^2$$
over $\mathcal U$. Once more, to deal with the  constraints, we may take advantage of the positive invariance of the associated heat flow:
\begin{equation}\label{parabolic equation_i}
\left\{ \begin{aligned}
 &\dfrac{\partial U_i}{\partial t}-\Delta U_i=-U_i\sum_{j\neq i}U_j^2, ~~\mbox{in}~~[0,+\infty)\times B_R(0),\\
                          \end{aligned} \right.\end{equation}
which can be solved under conditions \eqref{eqn2_i}, \eqref{eqn2_ibc} and initial conditions in $\mathcal U$.
Thus, the minimizer of the energy $(u_{1,R},\dots,u_{k,R})$ solves the differential system.
In addition, using the test function $(\Psi_1,\dots,\Psi_k)$, where $\Psi_i=\Psi\circ G^{i-1}$, $i=1,\dots,k$, we have
\[\int_{B_R(0)}\sum_1^k|\nabla u_{i,R}|^2+\sum_{i<j}u_{i,R}^2u_{j,R}^2\leq
k\int_{B_R(0)}|\nabla \Psi|^2.
\]
Now by the Almgren
monotonicity formula below (Proposition \ref{monotonocity}) and the boundary conditions, we get
$$N(r)\leq N(R)\leq \frac{R\int_{B_R(0)}|\nabla\Psi|^2}{\int_{\partial B_R(0)}|\Psi|^2}=d, \ \forall r \in (0, R).$$
\end{proof}
%%%%%%%%%%%%%%%%%%%%%%%%%%%%%%%%%%%%%%%

In order to conclude the proof of Theorems \ref{thm existence on bounded set} and \ref{thm existence on bounded seti}, we need to find upper and lower bounds for the solutions, uniform with respect to $R$ on bounded subsets of $\mathbb C$. That
is, we will prove that for any $ r>0$, there  exists positive constants $0<c(r)<C(r)$ (independent of $R$) such
that
\begin{equation}\label{uniform upper bound}c(r)<
\sup\limits_{B_r(0)}u_R\leq C(r).
\end{equation}
Once  we have this estimate, then by letting $R\rightarrow+\infty$, a
subsequence of $(u_R,v_R)$ will converge to a solution $(u,v)$ of
 problem (\ref{maineqn}), uniformly on any compact set of
$\mathbb{R}^2$. It is easily seen that properties (1), (2), (3) and
(4) in Theorem \ref{thm existence on bounded set} can be  derived by passing to
the limit, and we obtain the main results stated in Theorem \ref{main result} and \ref{thm:maini}. It then remains to establish the bound (\ref{uniform upper bound}). In the next section, we shall obtain this estimate by using the monotonicity formula.

%\par
%The remaining part of this paper will be devoted to the proof of
%\eqref{uniform upper bound}.

%%%%%%%%%%%%%%%%%%%%%%%%%%%%%%%%%%%%%%%%%%%
\section{Monotonicity formula}
%%%%%%%%%%%%%%%%%%%%%%%%%%%%%%%%%%%%%%%%%%%
Let us start by stating  some monotonicity formulae for solutions to
(\ref{maineqn}), for any dimension $n\geq 2$.  The first two are
well-known and we include them here for completeness. But we will also require some refinements.

\begin{prop}\label{monotonocity 1}
For $r>0$ and $x\in\mathbb{R}^n$,
$$E(r)=r^{2-n}\int_{B_r(x)}\sum_1^k|\nabla u_i|^2+\sum_{i<j}u_i^2u_j^2$$
is nondecreasing in $r$.
\end{prop}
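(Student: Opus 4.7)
By translation invariance I may assume $x=0$. Set
\[
W:=\sum_{i<j}u_i^2u_j^2,\qquad A(r):=\int_{B_r(0)}\sum_{i=1}^{k}|\nabla u_i|^2,\qquad B(r):=\int_{B_r(0)}W,
\]
so that $f(r):=A(r)+B(r)$ and $E(r)=r^{2-n}f(r)$. Differentiation gives $E'(r)=r^{1-n}\bigl[rf'(r)-(n-2)f(r)\bigr]$, and by the coarea formula $f'(r)=\int_{\partial B_r}\bigl(\sum_i|\nabla u_i|^2+W\bigr)$. The plan is to prove the identity
\[
rf'(r)-(n-2)f(r)\;=\;2r\int_{\partial B_r}\sum_{i=1}^{k}(\partial_\nu u_i)^2\;+\;2B(r),
\]
whose right-hand side is manifestly nonnegative; this yields $E'(r)\ge 0$ and the monotonicity.

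To establish the identity I would derive a Pohozaev-type relation by multiplying each equation $\Delta u_i=u_i\sum_{j\ne i}u_j^2$ by the radial deformation $y\cdot\nabla u_i$, integrating over $B_r(0)$, and then summing over $i$. On the left-hand side, the standard manipulation
\[
\Delta u_i\,(y\cdot\nabla u_i)=\mathrm{div}\bigl(\nabla u_i\,(y\cdot\nabla u_i)\bigr)-|\nabla u_i|^2-\tfrac12 y\cdot\nabla|\nabla u_i|^2,
\]
followed by the divergence theorem (using $y=r\nu$ on $\partial B_r$ and $\mathrm{div}\, y=n$), yields for each $i$
\[
\int_{B_r}\Delta u_i\,(y\cdot\nabla u_i)\,dy\;=\;r\int_{\partial B_r}(\partial_\nu u_i)^2+\tfrac{n-2}{2}\int_{B_r}|\nabla u_i|^2-\tfrac{r}{2}\int_{\partial B_r}|\nabla u_i|^2.
\]

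The key algebraic observation on the right-hand side is the symmetrization
\[
\sum_i u_i(y\cdot\nabla u_i)\sum_{j\ne i}u_j^2\;=\;\tfrac12\sum_{i\ne j}(y\cdot\nabla u_i^2)\,u_j^2\;=\;\tfrac12\,y\cdot\nabla W,
\]
which is obtained by pairing each $(i,j)$ with $(j,i)$ so that $y\cdot\nabla(u_i^2u_j^2)$ appears; integrating $\tfrac12 y\cdot\nabla W$ by parts produces $\tfrac r2\int_{\partial B_r}W-\tfrac n2 B(r)$. Equating the summed left- and right-hand sides and reorganizing the boundary terms gives
\[
\tfrac{r}{2}\,f'(r)\;=\;r\int_{\partial B_r}\sum_i(\partial_\nu u_i)^2+\tfrac{n-2}{2}\,A(r)+\tfrac{n}{2}\,B(r),
\]
from which the target identity follows at once by subtracting $(n-2)f(r)=(n-2)(A+B)$ and multiplying by two.

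There is no serious obstacle here: the argument is purely computational. The only point requiring care is keeping track of the two boundary contributions $\tfrac r2\int_{\partial B_r}|\nabla u_i|^2$ and $\tfrac r2\int_{\partial B_r}W$, which must combine with the correct sign to reconstruct $\tfrac r2 f'(r)$. The same computation applies verbatim to the two-component case $(k=2)$, covering in particular the system \eqref{maineqn} treated throughout the paper.
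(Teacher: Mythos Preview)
Your proof is correct and is precisely the standard Pohozaev/Rellich computation that underlies this result; the paper does not give its own argument but simply cites \cite{C-L 2}, and the identity you derive is exactly the formula \eqref{4.1} the paper invokes later in the proof of Proposition~\ref{monotonocity}.
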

For a proof, see \cite{C-L 2}. The next statement is an Almgren-type
monotonicity formula with remainder.
\begin{prop}\label{monotonocity}
For $r>0$ and $x\in\mathbb{R}^n$, let us define
\[H(r)=r^{1-n}\int_{\partial B_r(x)}\sum_1^k u_i^2.\] Then
$$N(r;x):=\frac{E(r)}{H(r)}$$
is nondecreasing in $r$. In addition there holds
\begin{equation}\label{eq:remainder}
\int_{0}^r \dfrac{2\int_{B_s}\sum_{i<j}^ku_i^2u_j^2}{\int_{\partial B_s}\sum_1^ku_i^2}ds\leq N(r)\;.
\end{equation}
\end{prop}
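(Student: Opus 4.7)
The plan is to compute $E'(r)$ and $H'(r)$ directly from the system, then couple them via a Cauchy--Schwarz inequality on the sphere. To streamline notation, write $E=D+I$, where $D(r)=r^{2-n}\int_{B_r(x)}\sum_i|\nabla u_i|^2$ and $I(r)=r^{2-n}\int_{B_r(x)}\sum_{i<j}u_i^2u_j^2$, and set $S(r)=r^{2-n}\int_{\partial B_r(x)}\sum_i|\partial_\nu u_i|^2$. I will suppress the centering at $x$ in what follows.

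First, differentiating $H(r)=\int_{S^{n-1}}\sum_i u_i(x+r\omega)^2\,d\omega$ and integrating by parts using $\Delta u_i=u_i\sum_{j\neq i}u_j^2$, one finds
\[
\int_{\partial B_r}\sum_i u_i\,\partial_\nu u_i=\int_{B_r}\sum_i|\nabla u_i|^2+2\int_{B_r}\sum_{i<j}u_i^2u_j^2,
\]
so that $H'(r)=2(E+I)/r$. For $E'$, I would derive a Pohozaev identity by multiplying each equation by $(y-x)\cdot\nabla u_i$ and summing over $i$. The crucial algebraic identity is
\[
\sum_{i=1}^k u_i\nabla u_i\sum_{j\neq i}u_j^2=\frac{1}{2}\nabla\Bigl(\sum_{i<j}u_i^2u_j^2\Bigr),
\]
which is checked by direct expansion; tracking the $\tfrac12$ factor correctly is what drives the subsequent cancellations. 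Two integrations by parts then give
\[
r^{2-n}\int_{\partial B_r}\Bigl(\sum_i|\nabla u_i|^2+\sum_{i<j}u_i^2u_j^2\Bigr)=\frac{n-2}{r}D+\frac{n}{r}I+2S,
\]
from which the clean formula $E'(r)=2I(r)/r+2S(r)$ follows after combining with the direct differentiation of $E$.

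Finally, Cauchy--Schwarz applied to $\int_{\partial B_r}\sum_i u_i\partial_\nu u_i$ yields $(E+I)^2\leq rHS$, hence $2SH\geq 2(E+I)^2/r$. Substituting into $N'H^2=E'H-EH'$, one obtains
\[
E'H-EH'\geq \frac{2IH+2(E+I)^2-2E(E+I)}{r}=\frac{2I\,[H+E+I]}{r}\geq 0,
\]
which gives the monotonicity $N'(r)\geq 0$. In fact the same bound shows $N'(r)\geq 2I(r)/(rH(r))$, and the quotient $2I(s)/(sH(s))$ is precisely $2\int_{B_s}\sum_{i<j}u_i^2u_j^2\big/\int_{\partial B_s}\sum_i u_i^2$; integrating from $0^+$ to $r$ and using $N(0^+)\geq 0$ then yields the remainder inequality \eqref{eq:remainder}. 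The main obstacle is purely algebraic: without the correct $\tfrac12$ in the identity for $\sum_i u_i\nabla u_i\sum_{j\neq i}u_j^2$, the expression $E'H-EH'$ does not reduce to the manifestly nonnegative quantity $2I[H+E+I]/r$, and the monotonicity breaks down.
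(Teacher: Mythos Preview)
Your proof is correct and follows essentially the same route as the paper: compute $H'(r)=2(E+I)/r$ via the divergence theorem, compute $E'(r)=2I/r+2S$ via a Pohozaev/Rellich identity, and close with the Cauchy--Schwarz inequality $(E+I)^2\leq rHS$ on $\partial B_r$, then integrate $N'(r)\geq 2I/(rH)$ to obtain the remainder estimate. The only cosmetic difference is that the paper first uses $E\leq E+I$ to drop to $N'\geq (2SH-2(E+I)^2/r)/H^2 + 2I/(rH)$ and then invokes Cauchy--Schwarz to discard the first term, whereas you keep the Cauchy--Schwarz contribution and obtain the slightly sharper intermediate bound $N'H^2\geq 2I[H+E+I]/r$; both reduce to the same $N'\geq 2I/(rH)$.
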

\begin{proof}
For simplicity, take $x$ to be the origin $0$ and let $k=2$. We have
\[H(r)= r^{1-n}\int_{\partial B_r}u^2+v^2\;,\qquad E(r)=r^{2-n}\int_{B_r}|\nabla u|^2+|\nabla v|^2+u^2v^2\;.
\]
Then, direct calculations show that
\begin{equation}\label{4.2}
\frac{d}{dr} H(r)
=2r^{1-n}\int_{B_r}|\nabla u|^2+|\nabla v|^2+2u^2v^2.
\end{equation}
By the proof of
Proposition \ref{monotonocity 1}, we have
\begin{equation}\label{4.1}
\frac{d}{dr} E(r)
=2r^{2-n}\int_{\partial B_r}[u_r^2+v_r^2] +2r^{1-n}\int_{B_r}u^2v^2.
\end{equation}
With these two identities, we obtain
\begin{multline*}
\frac{d}{dr}\frac{E}{H}(r)
=\dfrac{H [2r^{2-n}\int_{\partial B_r}(u_r^2+v_r^2)
+2r^{1-n}\int_{B_r}u^2v^2]
-E[2r^{1-n}\int_{\partial B_r} uu_r+vv_r]}{H^2}\\
\geq \dfrac{2r^{3-2n}\int_{\partial B_r}(u^2+v^2)
\int_{\partial B_r}(u_r^2+v_r^2)
-2r^{3-2n}\left[\int_{\partial B_r} uu_r+vr_r\right]^2}{H^{2}}+\\
+\dfrac{2r^{1-n}\int_{B_r}u^2v^2}{H}
\geq \dfrac{2r^{1-n}\int_{B_r}u^2v^2}{H}.
\end{multline*}
Here we have used  the following inequality
$$E(r)\leq \int_{B_r}|\nabla u|^2+|\nabla v|^2+2u^2v^2
=\int_{\partial B_r} uu_r+vr_r.$$
Hence this yields monotonicity of the Almgren quotient. In addition, by integrating the above
inequality we obtain
\begin{equation*}
\int_{r_0}^r \dfrac{2\int_{B_s}u^2v^2}{\int_{\partial B_s}u^2+v^2}ds\leq N(r)\;.
\end{equation*}
\end{proof}
If $x=0$, we simply denote $N(r;x)$ as $N(r)$. Assuming an upper
bound on $N(r)$, we establish a doubling property by the Almgren
monotonicity formula.
\begin{prop}\label{doubling property}
Let $R>1$ and let $(u_1,\dots,u_k)$ be a solution of \eqref{eq:system} on $B_R$. If $N(R)\leq d$, then for any $1<r_1\leq r_2\leq R$
\begin{equation}\label{eq:h_monotone}
\dfrac{H(r_2)}{H(r_1)}\leq e^{d}\dfrac{r_2^{2d}}{r_1^{2d}}.
\end{equation}
%$$(2r)^{1-n}\int_{\partial B_{2r}(0)}u^2+v^2\leq 16^dr^{1-n}\int_{\partial B_r(0)}u^2+v^2.$$
\end{prop}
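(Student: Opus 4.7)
\medskip

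\textbf{Proof proposal.} The plan is to derive a logarithmic differential inequality for $H(r)$ from the identities already computed in the proof of Proposition \ref{monotonocity}, and then integrate it using both the hypothesis $N(R)\le d$ and the remainder estimate \eqref{eq:remainder}.

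First, starting from
\[
\frac{d}{dr}H(r)=2r^{1-n}\int_{B_r}\sum_{i=1}^{k}|\nabla u_i|^2+2\sum_{i<j}u_i^2u_j^2,
\]
I would divide by $H(r)=r^{1-n}\int_{\partial B_r}\sum_i u_i^2$ to obtain
\[
\frac{H'(r)}{H(r)}=\frac{2\,r\int_{B_r}\!\bigl(\sum_i|\nabla u_i|^2+\sum_{i<j}u_i^2u_j^2\bigr)}{r\int_{\partial B_r}\sum_i u_i^2}+\frac{2\int_{B_r}\sum_{i<j}u_i^2u_j^2}{\int_{\partial B_r}\sum_i u_i^2}=\frac{2N(r)}{r}+R(r),
\]
where $R(r)$ denotes the second summand. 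This is the key identity.

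Next, I would integrate from $r_1$ to $r_2$. The monotonicity of $N$ together with the hypothesis gives $N(r)\le N(R)\le d$ for all $r\in[r_1,r_2]$, so
\[
\int_{r_1}^{r_2}\frac{2N(r)}{r}\,dr\le 2d\,\log\frac{r_2}{r_1}.
\]
For the remainder contribution, the key point is that \eqref{eq:remainder} in Proposition \ref{monotonocity} says exactly
\[
\int_0^{r_2}R(s)\,ds\le N(r_2)\le d,
\]
so in particular $\int_{r_1}^{r_2}R(s)\,ds\le d$. Combining these two estimates,
\[
\log\frac{H(r_2)}{H(r_1)}=\int_{r_1}^{r_2}\frac{H'(r)}{H(r)}\,dr\le 2d\log\frac{r_2}{r_1}+d,
\]
and exponentiating yields the claimed bound $H(r_2)/H(r_1)\le e^d(r_2/r_1)^{2d}$.

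The proof is essentially a bookkeeping argument; there is no real obstacle, because both pieces needed -- the representation of $H'/H$ in terms of $N$ plus a quartic remainder, and the integrated bound on that remainder -- are already available from Proposition \ref{monotonocity}. The only mild point worth checking is that the restriction $r_1>1$ in the statement is not actually used in the argument above (it holds for any $0<r_1\le r_2\le R$); presumably the authors only need this range in subsequent applications, where the logarithmic factor must be controlled. If instead one wanted to avoid invoking \eqref{eq:remainder}, an alternative would be to drop the $u_i^2u_j^2$ terms from $H'(r)$ and obtain the cruder bound $H(r_2)/H(r_1)\le(r_2/r_1)^{2d}$, but the sharper constant $e^d$ in the statement is what forces the use of the remainder estimate.
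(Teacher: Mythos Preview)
Your proof is correct and follows exactly the paper's route: compute $(\log H)'=2N(r)/r + R(r)$ from \eqref{4.2}, bound the first term by $2d/r$ via monotonicity of $N$ and the hypothesis $N(R)\le d$, control $\int R(s)\,ds$ by the remainder estimate \eqref{eq:remainder}, then integrate and exponentiate. One small correction to your closing commentary: the ``alternative'' you sketch does not work, since the quartic terms in $H'(r)$ are nonnegative and dropping them yields a \emph{lower} bound on $H'/H$, not an upper bound (and $(r_2/r_1)^{2d}$ would in any case be sharper than $e^d(r_2/r_1)^{2d}$, not cruder); the use of \eqref{eq:remainder} is not optional here.
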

\begin{proof}
For simplicity of notation, we expose the proof for the case of two components. By direct calculation using \eqref{4.2}, we obtain
\begin{eqnarray*}
\frac{d}{dr}\log\Bigg[r^{1-n} (\int_{\partial B_r(0)}u^2+v^2)\Bigg]
&=&\frac{2\int_{B_r}|\nabla u|^2+|\nabla v|^2+2u^2v^2}{\int_{\partial B_r(0)}u^2+v^2}\\
&\leq& \frac{2N(r)}{r}+\frac{2\int_{B_r}u^2v^2}{\int_{\partial B_r(0)}u^2+v^2}\\
&\leq& \frac{2d}{r}+\frac{2\int_{B_r}u^2v^2}{\int_{\partial B_r(0)}u^2+v^2}\\
\end{eqnarray*}
Thanks to \eqref{eq:remainder}, by integrating, we find that, if $r_1\leq r_2\leq 2r_0$ then
\begin{equation}\label{eq:h_monotone1}
\dfrac{H(r_2)}{H(r_1)}\leq e^{d}\dfrac{r_2^{2d}}{r_1^{2d}}.
\end{equation}
\end{proof}
An immediate consequence of Proposition \ref{doubling property} is the lower bound on bounded sets for the solutions found in Theorems \ref{thm existence on bounded set} and \ref{thm existence on bounded seti}.
\begin{prop}\label{prop:lowerbound}
Ler $(u_{1,R},\dots,u_{k,R})$ be a family of solutions to \eqref{eq:system} such that $N(R)\leq d$ and $H(R)= CR^{2d}$. Then, for every fixed $r<R$, there holds
\[H(r)\geq Ce^{-d}r^{2d}.\]
\end{prop}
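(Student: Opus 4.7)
The statement is presented as an \emph{immediate consequence} of Proposition \ref{doubling property}, so the plan is simply to unwind that doubling inequality and substitute the hypothesis on $H(R)$.

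First I would apply Proposition \ref{doubling property} to the solution $(u_{1,R},\dots,u_{k,R})$ (noting that the hypothesis $N(R)\leq d$ is exactly what that proposition requires) with the choice $r_1=r$ and $r_2=R$, where $1<r<R$. This yields
\[
\frac{H(R)}{H(r)}\;\leq\;e^{d}\,\frac{R^{2d}}{r^{2d}}.
\]
Rearranging gives $H(r)\geq e^{-d}\,(r/R)^{2d}\,H(R)$, and then substituting the normalization $H(R)=CR^{2d}$ produces the claimed bound
\[
H(r)\;\geq\;C e^{-d}\,r^{2d}.
\]

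There is no real obstacle: the only point to be careful about is the range of validity. Proposition \ref{doubling property} is stated for $1<r_1\leq r_2\leq R$, so this argument delivers the lower bound for every $r\in(1,R)$, which is what is used in the applications (the bound on any fixed compact set, uniformly in $R\to\infty$). If one wanted to extend it down to $r\in(0,1]$, one would simply note that $H$ is continuous and positive on $(0,R]$, so the estimate on $(1,R)$ together with monotonicity considerations (or an adjustment of the constant on the compact interval $(0,1]$) suffices; but for the later use in obtaining the uniform lower bound $\sup_{B_r}u_R\geq c(r)>0$, the inequality on $r>1$ is all that is needed, combined with a trace/Sobolev argument to pass from the spherical $L^{2}$-mass $H(r)$ to a pointwise lower bound.
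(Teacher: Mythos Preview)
Your argument is correct and is exactly the paper's approach: the paper states the proposition as an immediate consequence of Proposition~\ref{doubling property}, and you have simply written out that consequence by taking $r_1=r$, $r_2=R$ in \eqref{eq:h_monotone} and substituting $H(R)=CR^{2d}$. Your remark about the restriction $r>1$ coming from the hypotheses of Proposition~\ref{doubling property} is also accurate and matches how the bound is actually used later.
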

Another byproduct of the monotonicity formula with the remainder \eqref{eq:remainder} is the existence of the limit of $H(r)/r^{2d}$.
\begin{coro}\label{existencelimitH}
Let $R>1$ and let $(u_1,\dots,u_k)$ be a solution of \eqref{eq:system} on $\mathbb C$ such that $\lim_{r\to+\infty}N(r)\leq d$, then there exists
\begin{equation}\label{eq:Hhaslimit}
\lim_{r\to+\infty}\dfrac{H(r)}{r^{2d}}<+\infty\;.
\end{equation}
\end{coro}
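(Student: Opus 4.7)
My plan is to derive a differential identity for $\log[H(r)/r^{2d}]$ and split its right-hand side into a nonpositive term and an absolutely integrable one; each piece will separately have a limit at infinity, so the sum will too.

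The starting point is the logarithmic derivative of $H$, which was already computed in the proof of Proposition \ref{doubling property}: dividing the identity \eqref{4.2} by $H(r)$ yields
\begin{equation*}
\frac{d}{dr}\log H(r) \;=\; \frac{2N(r)}{r} + R(r),\qquad\text{where}\qquad R(r) := \frac{2\int_{B_r}\sum_{i<j}u_i^2u_j^2}{\int_{\partial B_r}\sum_i u_i^2}.
\end{equation*}
Subtracting $2d/r$ produces the key identity
\begin{equation*}
\frac{d}{dr}\log\!\left[\frac{H(r)}{r^{2d}}\right] \;=\; \frac{2\bigl(N(r)-d\bigr)}{r} + R(r).
\end{equation*}

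I would then observe that the two right-hand terms have different signs but complementary bounds. By Proposition \ref{monotonocity}, $N$ is nondecreasing; combined with the hypothesis $\lim_{r\to\infty}N(r)\leq d$, this forces $N(r)\leq d$ for every $r>0$, so the first term is pointwise nonpositive. The remainder bound \eqref{eq:remainder} of Proposition \ref{monotonocity} provides exactly what is needed for the second term, namely $\int_0^\infty R(r)\,dr \leq \lim_{r\to\infty}N(r) \leq d < \infty$, so $R$ is absolutely integrable on $(0,\infty)$.

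Integrating the identity from $1$ to $r$ then exhibits $\log[H(r)/r^{2d}]$ as the sum of a monotone nonincreasing function (the integral of the nonpositive term, with limit in $[-\infty,0]$) and a convergent integral (with limit in $[0,d]$). Hence $\log[H(r)/r^{2d}]$ itself has a limit in $[-\infty,+\infty)$, and exponentiating gives the desired limit in $[0,+\infty)$. I do not anticipate a real obstacle: the whole argument is bookkeeping on the Almgren monotonicity formula with remainder already established in Proposition \ref{monotonocity}. The only subtle point is that $\log[H(r)/r^{2d}]$ might a priori tend to $-\infty$, giving a zero limit for $H(r)/r^{2d}$, which is however consistent with the statement.
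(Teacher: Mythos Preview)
Your proposal is correct and is essentially the argument the paper has in mind: the paper states this corollary as an immediate ``byproduct of the monotonicity formula with the remainder \eqref{eq:remainder}'' without writing out a proof, and your decomposition $\frac{d}{dr}\log[H(r)/r^{2d}]=\frac{2(N(r)-d)}{r}+R(r)$ into a nonpositive term plus an integrable remainder is exactly how one unpacks that remark (it is the computation already carried out in the proof of Proposition~\ref{doubling property}, refined to get existence of the limit rather than just the upper bound).
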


%{\color{red} We have from Proposition 4.3 that
% \[ H(r) \leq C H(1) r^{2d} \ \mbox{for}\ r>1\]
% The problem is that we do not know if $H(1)= \int_{\partial B_1} (u^2+v^2)$ is bounded. Namely we don't know
% if $u$ and $v$ are locally bounded. We don't even know if $ u(0)+ v(0)$ is bounded (uniformly with respect to $R$). }
%

Now we prove the optimal lower bound on the growth of the solution. To this aim, we need a fine estimate on the asymptotics of the
lowest eigenvalue as the competition term diverges. The following result is an extension of Theorem 1.6 in \cite{blwz}, where
the estimate was proved in case of two components.
\begin{thm}\label{thm:lambda}
Let $d$ be a fixed integer and let us consider
\begin{multline}\label{eq:min_eigenvalue}
\mathcal{L}(d,\Lambda)=\min\left\{ \int_0^{2\pi}\sum_i^d|u^\prime_i|^2+\Lambda\sum_{i<j}^d u_i^2u_j^2\; \Bigg| \; \begin{array}{l} \int_0^{2\pi}\sum_i u_i^2=1, \
 u_{i+1}(x)=u_i(x-2\pi/d),\\
  u_1(-x)=u_1(x)\;,u_{d+1}=u_1\;
 \end{array}
 \right\}.
\end{multline}
Then, there exists a constant $C$ such that for all $\Lambda>1$ we have
\begin{equation}\label{eq:est_l}
d^2-C \Lambda^{-1/4}\leq\mathcal{L}(d,\Lambda)\leq d^2\;.
\end{equation}
\end{thm}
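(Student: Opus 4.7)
The plan is to prove the two inequalities independently; the upper one is direct, while the lower one is the substantive content of the theorem.

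\textbf{Upper bound.} For $\mathcal{L}(d,\Lambda)\le d^2$, I would exhibit an explicit symmetric test function whose interaction term vanishes and whose Rayleigh quotient equals $d^2$. Guided by the positivity regions of the harmonic polynomial $\Phi=\mathrm{Re}(z^d)$ on the unit circle, let $u_1$ be a suitably normalized first Dirichlet eigenfunction on the central arc of the positivity set of $\Phi$ (an explicit rescaled cosine), supported on that arc and vanishing outside. Setting $u_i(\theta):=u_1(\theta-2\pi(i-1)/d)$ respects the rotation relation, keeps $u_1$ even, and by the choice of arc the supports of the $u_i$'s are pairwise disjoint. Consequently $\sum_{i<j}\int u_i^2u_j^2=0$ for every $\Lambda$; after normalizing by $\int\sum u_i^2=1$, a direct computation using $\int_{-\pi/2}^{\pi/2}\cos^2\phi\,d\phi=\int_{-\pi/2}^{\pi/2}\sin^2\phi\,d\phi=\pi/2$ identifies the Rayleigh quotient with $d^2$, independent of $\Lambda$.

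\textbf{Lower bound.} Let $(u_1,\dots,u_d)$ be a minimizer of $\mathcal{L}(d,\Lambda)$; existence follows from the direct method since the symmetry constraints are weakly closed and the functional is coercive and weakly lower semicontinuous in $H^1$. Its Euler--Lagrange equations read
\[
-u_i''+\Lambda u_i\sum_{j\ne i}u_j^2=\mathcal{L}(d,\Lambda)\,u_i,\qquad i=1,\dots,d,
\]
and by the symmetry it suffices to analyze $u_1$ on a fundamental arc, with the interactions concentrated near its endpoints. The core argument is a transition-layer analysis. Outside a neighborhood of the expected interfaces, the penalty term forces the other components to be negligible, so $u_1$ approximately satisfies the linear Dirichlet eigenvalue equation on the arc, and its Rayleigh quotient is bounded below by $d^2$ (a Friedland--Hayman--type bound using the dihedral symmetry). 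Near each interface $\theta^*$, one rescales at scale $\delta=\delta(\Lambda)$ and observes that the blown-up pair of consecutive components $(u_i,u_{i+1})(\theta^*+\delta\tau)$ converges, in the $\Lambda\to\infty$ limit, to the one-dimensional heteroclinic profile of \eqref{1D}, uniquely determined (up to translation/scaling) by Theorem \ref{thm0}. Balancing the kinetic and quartic interaction terms inside the layer selects $\delta\sim\Lambda^{-1/4}$, and a careful integration by parts shows that the deficit in the Rayleigh quotient is at most $C\delta=C\Lambda^{-1/4}$, giving the asserted inequality.

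\textbf{Main obstacle.} The delicate step is the quantitative transition-layer estimate: one must verify that the layer width is indeed of order $\Lambda^{-1/4}$, that the blow-up converges to the unique 1D heteroclinic profile, and that the resulting energy deficit is linear in $\delta$ rather than of lower order. Two ingredients already available in the paper are crucial here: the uniform H\"older bounds of \cite{NTTV}, which give compactness of the blow-up sequences, and the uniqueness of the one-dimensional profile proved in Theorem \ref{thm0}, which identifies the limit and allows the sharp asymptotic expansion of the eigenvalue correction.
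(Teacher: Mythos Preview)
Your upper bound argument is correct and is indeed the natural one; the paper does not spell this out but it is implicit.

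For the lower bound, your strategy and the paper's diverge substantially. You propose a blow-up analysis near the interfaces, converging to the entire heteroclinic of \eqref{1D} and invoking its uniqueness (Theorem~\ref{thm0}) to extract the $O(\Lambda^{-1/4})$ correction. The paper instead works directly at the ODE level: from the Euler--Lagrange system and the conserved Hamiltonian it reads off that at the interface $u_i(a)=u_{i-1}(a)\simeq K\Lambda^{-1/4}$ with bounded slope, then derives an elementary exponential decay bound for $u_{i-1}$ and a linear growth bound for $u_i$ on the active interval $I=(a,a+2\pi/d)$. These give $\Lambda\int_I u_{i-1}^2u_i^2\le C\Lambda^{-1/4}$ and hence $\mathcal L(d,\Lambda)\ge\lambda-C\Lambda^{-1/4}$. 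The eigenvalue $\lambda$ itself is bounded below by testing the equation for $u_i-u_{i-1}-u_{i+1}$ against $\hat u_i:=(u_i-u_{i-1}-u_{i+1})^+\in H_0^1(I)$ and applying the Poincar\'e inequality on $I$. No blow-up, no limiting profile, and in particular \emph{no use of Theorem~\ref{thm0}}: the uniqueness of the one-dimensional solution plays no role here, so your assertion that it is ``crucial'' is incorrect. What the paper does borrow is the two-component eigenvalue estimate of \cite{blwz} (their Theorem~1.6), of which the present result is the $d$-component extension; your citation of \cite{NTTV} for H\"older bounds is a bit off, since that concerns the PDE system in $\mathbb R^n$ rather than this periodic ODE.

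Your outline could in principle be completed, but the sentence ``a careful integration by parts shows that the deficit in the Rayleigh quotient is at most $C\delta$'' hides exactly the step that requires work. You would still need a mechanism to compare the minimizer to the Dirichlet problem on the arc; the paper's device of the truncated difference $\hat u_i$ is precisely such a mechanism, and it bypasses the blow-up entirely.
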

\begin{proof}
Any minimizer $(u_{1,\Lambda},\dots,u_ {d,\Lambda})$  solves the system of ordinary differential equations
\begin{equation}u_i^{''}=\Lambda u_i\sum_{j\neq i}u_j^2-\lambda u_i\;,\qquad i=1,\dots,d,
\end{equation}
together with the associated energy conservation law
\begin{equation}
\sum_1^d(u_i^{'})^2+\lambda u_i^2-\Lambda\sum_{i<j}^du_i^2u_j^2=h\;.
\end{equation}
Note that the Lagrange multiplier satisfies
\[\lambda=\int_0^{2\pi}\sum_i^d|u^\prime_i|^2+2\Lambda\sum_{i<j}^d u_i^2u_j^2=\mathcal{L}(d,\Lambda)+\int_0^{2\pi}\Lambda\sum_{i<j}^d u_i^2u_j^2\;.\]
As $\Lambda\to\infty$, we see convergence of the eigenvalues $\lambda\simeq \mathcal{L}(d,\Lambda)\to d^2$, together with the energies $h\to 2d^2$. Moreover, the solutions remain bounded in Lipschitz norm and converge in Sobolev and H\"older spaces (see \cite{blwz} for more details).
Now, let us focus on the interval $I=(a,a+2\pi/d)$ where the $i$-th component is active.
The symmetry constraints imply
\begin{multline*}u_{i-1}(a)=u_i(a)\;,u_{i-1}^{'}(a)=-u_i^{'}(a)\;,\\
 u_{i+1}(a+2\pi/d)=u_i(a+2\pi/d)\;,u_{i+1}^{'}(a+2\pi/d)=-u_i^{'}(a+2\pi/d)\end{multline*}
We observe that there is interaction only with the two prime neighboring components, while the others are exponentially small  (in $\Lambda$) on $I$.
Close to the endpoint $a$, the component  $u_i$ is increasing and convex, while $u_{i-1}$ is decreasing and again convex. Similarly to \cite{blwz} we have that
\begin{equation}\label{eq:iv}
u_i(a)= u_{i-1}(a)\simeq K\Lambda^{-1/4}\;, u'_i(a)= -u^{'}_{i-1}(a)\simeq H=(h+K)/2\;.
\end{equation}
Hence, in a right neighborhood of $a$, there holds
$u_i(x)\geq u_i(a)$, and therefore, as $u_{i-1}^{''}\geq\Lambda u_i^2(a)u_{i-1}$, from the initial value problem \eqref{eq:iv} we infer
\[u_{i-1}(x)\leq C u_i(a)e^{-\Lambda^{1/2}u_i(a)(x-a)}\;,\forall x\in [a,b].\]
On the other hand, on the same interval we have
\[u_{i}(x)\leq  u_i(a)+C(x-a)\;,\forall x\in [a,b].\]
(here and below $C$ denotes a constant independent of $\Lambda$). Consequently, there holds
\begin{equation}\label{eq:allterms}
\Lambda \int_I u_{i-1}^2 u_{i}^2+u_{i-1}^3 u_{i}+u_{i-1} u_{i}^2\leq C\Lambda^{-1/2}u_i(a)^{-1}\simeq C\Lambda^{-1/4}\;.
\end{equation}
In particular, this yields
\begin{equation}\label{eq:lambda}
\mathcal L(d,\Lambda)\geq \lambda -C\Lambda^{-1/4}\;.
\end{equation}
In order to estimate $\lambda$, let us consider $\widehat u_i=\left(u_i-\sum_{j=i\pm 1}u_j\right)^+$.
Then, as $u_i(a)=u_{i-1}(a)$ and $u_i(a+2\pi/d)=u_{i+1}(a+2\pi/d)$, $\widehat u_i\in H^1_0(I)$. By testing the differential equation for $u_i-\sum_{j=i\pm 1}u_j$ with $\widehat u_i$ on $I$ we find
\[\int_I |\widehat u_i^{'}|^2\leq \lambda \int_I |\widehat u_i|^2+C\Lambda^{-1/4}\;,\]
where in the last term we have majorized all the integrals of mixed fourth order monomials with \eqref{eq:allterms}.
As $|I|=2\pi/d$, using Poincar\'e inequality and \eqref{eq:lambda} we obtain the desired estimate on $\mathcal L(d,\Lambda)$.
\end{proof}

We are now ready to apply the estimate from below on $\mathcal L$ to derive a lower bound on the energy growth. We recall that  there holds
\[
\widehat E(r):=\int_{B_r(x)}\sum_1^k|\nabla u_i|^2+2\sum_{i<j}u_i^2u_j^2=\int_{\partial B_r(x)}\sum_1^k u_i\dfrac{\partial u_i}{\partial r}
\]
\begin{prop}\label{prop:upperbound}
Let $(u_{1,R},\dots,u_{k,R})$ be a solution of \eqref{eq:system} having the symmetries \eqref{eqn2_i} on $B_R$. There exists a constant $C$ (independent of $R$) such that for all $\;1\leq r_1\leq r_2\leq R$ there holds
\begin{equation}
\dfrac{\widehat E(r_2)}{\widehat E(r_1)}\geq C\dfrac{r_2^{2d}}{r_1^{2d}}
\end{equation}
\end{prop}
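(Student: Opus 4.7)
The plan is to derive the Riccati-type inequality $\widehat E^{\prime}(r)\geq \frac{2d}{r}\widehat E(r)-\frac{CH(r)^{3/4}}{r^{3/2}}$ and combine it with the two-sided bounds on $H$ established in Propositions \ref{doubling property} and \ref{prop:lowerbound} to trap $\widehat E(r)/r^{2d}$ between two positive constants uniformly in $R$. To obtain the differential inequality, I would first differentiate $\widehat E$ and decompose $|\nabla u_i|^2$ into radial and tangential components,
\[
\widehat E^{\prime}(r)=\int_{\partial B_r}\sum(\partial_r u_i)^2+\int_{\partial B_r}\sum|\nabla_\tau u_i|^2+2\sum_{i<j}u_i^2u_j^2.
\]
For each fixed $r$ the traces $\phi_i(\theta)=u_i(r\cos\theta,r\sin\theta)$ verify the symmetry hypotheses of Theorem \ref{thm:lambda}, and $H(r)=\int_0^{2\pi}\sum\phi_i^2\,d\theta$ in dimension two; applying that theorem after rescaling to the constraint $\int\sum\tilde\phi_i^2=1$, so that the effective coupling constant is $\Lambda=2r^2H(r)$, yields the sharp angular lower bound
\[
\int_{\partial B_r}\sum|\nabla_\tau u_i|^2+2\sum_{i<j}u_i^2u_j^2\geq \frac{d^2H(r)}{r}-\frac{CH(r)^{3/4}}{r^{3/2}}.
\]
Cauchy--Schwarz on the identity $\widehat E(r)=\int_{\partial B_r}\sum u_i\partial_r u_i$ gives $\int_{\partial B_r}\sum(\partial_r u_i)^2\geq \widehat E(r)^2/(rH(r))$, and the elementary $a+b\geq 2\sqrt{ab}$ applied to the two dominant terms produces the announced inequality.

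The upper bound $H(r)\leq Cr^{2d}$ of Proposition \ref{doubling property} rewrites this as $\bigl(\widehat E(r)/r^{2d}\bigr)^{\prime}\geq -C/r^{d/2+3/2}$, whose right-hand side is integrable on $[1,\infty)$; integrating from $r_1$ to $r_2$ yields the almost monotonicity
\[
\frac{\widehat E(r_2)}{r_2^{2d}}\geq \frac{\widehat E(r_1)}{r_1^{2d}}-\frac{C^{\prime}}{r_1^{(d+1)/2}}\qquad (1\leq r_1\leq r_2\leq R).
\]
For a matching lower bound at a single point, I would exploit $H(r)\geq cr^{2d}$ (Proposition \ref{prop:lowerbound}) together with the identity $H^{\prime}(r)=2\widehat E(r)/r$ (valid for $n=2$): picking $K$ so that $cK^{2d}>2C$, for every $r_0\geq 1$
\[
\int_{r_0}^{Kr_0}\frac{2\widehat E(s)}{s}\,ds=H(Kr_0)-H(r_0)\geq (cK^{2d}-C)r_0^{2d},
\]
so some $s^\ast\in[r_0,Kr_0]$ must satisfy $\widehat E(s^\ast)/(s^\ast)^{2d}\geq c_2>0$, with $c_2$ depending only on $c,C,K$ and independent of $R$.

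To finish, fix $r_0^\ast$ large enough that $C^{\prime}(r_0^\ast)^{-(d+1)/2}\leq c_2/2$; the previous step applied at $r_0=r_0^\ast$ combined with the almost monotonicity yields $\widehat E(r)/r^{2d}\geq c_2/2$ for every $r\geq Kr_0^\ast$. On the bounded interval $[1,Kr_0^\ast]$ the estimate $\widehat E(r)\leq dH(r)+\int_{B_r}\sum_{i<j}u_i^2u_j^2\leq C^{\prime\prime}r^{2d}$ (from $N\leq d$ and the remainder inequality \eqref{eq:remainder}) together with the monotonicity of $\widehat E$ is enough to bound $\widehat E(r_2)/\widehat E(r_1)$ from below by a constant multiple of $(r_2/r_1)^{2d}$, and combining the two regimes yields a single universal constant as required. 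The main obstacle is not the ODE analysis itself but obtaining the lower bound on $\widehat E$ at a single point: the error term $CH^{3/4}/r^{3/2}$ in the Riccati inequality involves $H$ rather than $\widehat E$, so the inequality cannot by itself prevent $\widehat E$ from collapsing; it is the averaging argument, which converts the $H$-lower bound into a pointwise lower bound for $\widehat E$ through the identity $H^{\prime}=2\widehat E/r$, that really makes the proof close.
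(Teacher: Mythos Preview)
Your argument works for the constructed family, but it takes a considerably longer route than the paper and draws on hypotheses ($N(R)\leq d$ and $H(R)=CR^{2d}$, via Propositions~\ref{doubling property} and~\ref{prop:lowerbound}) that are not part of the statement of the proposition.

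Both proofs begin identically: differentiate $\widehat E$, split the gradient into radial and tangential parts, and invoke Theorem~\ref{thm:lambda} on the circular trace to bound the angular energy from below. The divergence is in how the pieces are reassembled. The paper works with the \emph{logarithmic} derivative of $r^{-2d}\widehat E(r)$,
\[
\frac{d}{dr}\log\bigl(r^{-2d}\widehat E(r)\bigr)
=-\frac{2d}{r}+\frac{\displaystyle\int_{0}^{2\pi}\sum_i(\partial_r u_i)^2+\frac{\mathcal L}{r^{2}}\int_{0}^{2\pi}\sum_i u_i^{2}}{\displaystyle\int_{0}^{2\pi}\sum_i u_i\,\partial_r u_i},
\]
and applies $a+b\geq 2\sqrt{ab}$ to the numerator together with Cauchy--Schwarz on the denominator \emph{before} isolating the defect $d^{2}-\mathcal L$. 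The whole fraction is then bounded below by $2\sqrt{\mathcal L}/r$, giving
\[
\frac{d}{dr}\log\bigl(r^{-2d}\widehat E(r)\bigr)\geq -\frac{2d-2\sqrt{\mathcal L}}{r}\geq -\frac{C}{r^{3/2}},
\]
and a single integration yields $\widehat E(r_2)/\widehat E(r_1)\geq e^{-C'}(r_2/r_1)^{2d}$ directly: no upper bound on $H$, and no separate lower bound on $\widehat E$, is needed.

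Your route instead passes to the \emph{additive} inequality $(r^{-2d}\widehat E)'\geq -CH^{3/4}r^{-2d-3/2}$. Controlling this error forces the doubling bound $H\leq Cr^{2d}$, and because an additive almost-monotonicity cannot by itself prevent $r^{-2d}\widehat E$ from collapsing, you must then run the averaging argument through $H'=2\widehat E/r$ and the lower bound on $H$ to manufacture a pointwise lower bound for $\widehat E$. All of this is correct, and your identification of the effective coupling as $\Lambda=2r^{2}H(r)$ is in fact more careful than the paper's $\Lambda=2r^{2}$; but the entire detour is avoided simply by keeping the estimate in multiplicative form.
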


\begin{proof}
Let us compute,
\begin{multline*}\dfrac{d}{dr}\log\left(r^{-2d}\widehat E(r)\right)=-\dfrac{2d}{r}+\dfrac{\int_{\partial B_r(x)}{\sum_1^k|\nabla u_i|^2}+2\sum_{i<j}u_i^2u_j^2}{\int_{\partial B_r(x)}\sum_1^k u_i\dfrac{\partial u_i}{\partial r}}\\
=-\dfrac{2d}{r}+\dfrac{\int_{\partial B_r(x)}\sum_1^k\left(\dfrac{\partial u_i}{\partial r}\right)^2+\dfrac{1}{r^2}\left[\sum_1^k\left(\dfrac{\partial u_i}{\partial \theta}\right)^2+2r^2\sum_{i<j}u_i^2u_j^2\right]}{\int_{\partial B_r(x)}\sum_1^k u_i\dfrac{\partial u_i}{\partial r}}\\
=-\dfrac{2d}{r}+\dfrac{\int_{0}^{2\pi}\sum_1^k\left(\dfrac{\partial u_i}{\partial r}\right)^2+\dfrac{1}{r^2}\left[\sum_1^k\left(\dfrac{\partial u_i}{\partial \theta}\right)^2+2r^2\sum_{i<j}u_i^2u_j^2\right]}{\int_{0}^{2\pi}\sum_1^k u_i\dfrac{\partial u_i}{\partial r}}\end{multline*}
Now we use Theorem \ref{thm:lambda} and we continue the chain of inequalities:
\begin{multline}\label{eq:Emonotone}\dfrac{d}{dr}\log\left(r^{-2d}\widehat E(r)\right)\geq -\dfrac{2d}{r}+\dfrac{\int_{0}^{2\pi}\sum_1^k\left(\dfrac{\partial u_i}{\partial r}\right)^2+\dfrac{\mathcal L(d,2r^2)}{r^2}\int_0^{2\pi}\sum_1^k u_i^2}{\int_{0}^{2\pi}\sum_1^k u_i\dfrac{\partial u_i}{\partial r}}\\
\geq  -\dfrac{2d-2 \sqrt{\mathcal L(d,2r^2)}}{r}\geq -\dfrac{C}{r^{3/2}}\;,
\end{multline}
where in the last line we have used H\"older inequality. By integration we easily obtain the assertion.
\end{proof}
A direct consequence of the above inequalities is the non vanishing of the quotient $E/r^{2d}$:
\begin{coro}\label{existencelimitE}
Let $R>1$ and let $(u_1,\dots,u_k)$ be a solution of \eqref{eq:system} on $\mathbb C$ satisfying \ref{eqn2_i}: then there exists
\begin{equation}\label{eq:hatEhaslimit}
\lim_{r\to+\infty}\dfrac{\widehat E(r)}{r^{2d}}=b\in (0,+\infty]\;.
\end{equation}
If, in addition, $\lim_{r\to+\infty}N(r)\leq d$, then we have that $b<+\infty$ and
\begin{equation}\label{eq:Ehaslimit}
\lim_{r\to+\infty}N(r)=d,\quad\text{and}\quad\lim_{r\to+\infty}\dfrac{E(r)}{r^{2d}}=b\;.
\end{equation}
\end{coro}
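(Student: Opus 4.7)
The plan is to split the corollary into two statements and handle them in order, using Proposition \ref{prop:upperbound}, Proposition \ref{doubling property} and the remainder inequality \eqref{eq:remainder} from Proposition \ref{monotonocity}.

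For the first statement (existence of $\lim r^{-2d}\widehat E(r)\in(0,+\infty]$), I will leverage the differential inequality \eqref{eq:Emonotone} established inside the proof of Proposition \ref{prop:upperbound}, namely
\[
\frac{d}{dr}\log\!\bigl(r^{-2d}\widehat E(r)\bigr)\geq -\frac{C}{r^{3/2}}.
\]
Since $\int_1^\infty s^{-3/2}ds<\infty$, the function $r\mapsto\log(r^{-2d}\widehat E(r))+2Cr^{-1/2}$ is monotone nondecreasing on $[1,+\infty)$. Because the correction $2Cr^{-1/2}$ vanishes at infinity, this yields existence of $b:=\lim_{r\to\infty}r^{-2d}\widehat E(r)\in(0,+\infty]$, with strict positivity coming from evaluating the monotone quantity at any $r_0$ where $\widehat E(r_0)>0$ (nontriviality of any one component, which is forced by the symmetries \eqref{eqn2_i}, supplies such an $r_0$).

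For the second statement, I will assume $\lim N(r)\leq d$. The monotonicity of $N$ from Proposition \ref{monotonocity} then gives $N(r)\leq d$ for every $r$, and Proposition \ref{doubling property} upgrades this to a uniform bound $H(r)\leq Cr^{2d}$ for $r\geq 1$. Next, in dimension two the identity \eqref{4.2} reads $H'(r)=2\widehat E(r)/r$, so
\[
H(r)=H(1)+2\int_1^{r}s^{2d-1}\bigl(s^{-2d}\widehat E(s)\bigr)\,ds.
\]
An application of l'Hôpital's rule (or Stolz--Cesaro) shows that $r^{-2d}H(r)\to b/d$ with the convention $b/d=+\infty$ if $b=+\infty$; the upper bound $H(r)\leq Cr^{2d}$ then forces $b<+\infty$, and in particular $\lim_{r\to\infty}r^{-2d}H(r)=b/d$.

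It remains to show the two equalities $\lim N(r)=d$ and $\lim r^{-2d}E(r)=b$. For this the key is that the interaction term $\phi(r):=\sum_{i<j}\int_{B_r}u_i^2u_j^2$ is $o(r^{2d})$. Using the remainder \eqref{eq:remainder} together with $\int_{\partial B_s}\sum u_i^2=sH(s)\leq Cs^{2d+1}$ gives
\[
\int_0^{\infty}\frac{\phi(s)}{s^{2d+1}}\,ds<+\infty,
\]
and since $\phi$ is nondecreasing, the pointwise bound $\phi(r)/r^{2d}\leq C\int_r^{2r}\phi(s)s^{-2d-1}ds$ forces $\phi(r)/r^{2d}\to 0$. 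In dimension two $E(r)=\widehat E(r)-\phi(r)$, hence $r^{-2d}E(r)\to b$, and finally $N(r)=E(r)/H(r)\to b/(b/d)=d$.

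The main obstacle, in my view, is precisely this last step: converting the integral-type remainder \eqref{eq:remainder} into the pointwise asymptotic $\phi(r)=o(r^{2d})$. Without it one could only conclude $\lim N(r)\leq d$ and would not be able to identify $\lim E(r)/r^{2d}$ with $\lim\widehat E(r)/r^{2d}$; the other two ingredients (existence of the limit of $\widehat E/r^{2d}$ and the ODE comparison giving $H(r)/r^{2d}\to b/d$) are more routine consequences of the monotonicity machinery already in hand.
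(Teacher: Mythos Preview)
Your argument is correct, and the overall strategy (monotonicity inequality \eqref{eq:Emonotone} for the first claim; the remainder \eqref{eq:remainder} to control the interaction term for the second) is the same as the paper's. The organization of the second half, however, differs from the paper in two useful ways.

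First, the paper does not compute $\lim H(r)/r^{2d}$ directly. It invokes Corollary~\ref{existencelimitH} only to know that this limit exists and is finite, then writes $\lim E(r)/r^{2d}=\lim N(r)\cdot\lim H(r)/r^{2d}$, and finally closes with a contradiction argument: if $\lim N(r)<d$, the growth of $H$ (dictated by $H'=2\widehat E/r$) would be incompatible with that of $E$. Your l'H\^opital step $H(r)/r^{2d}\to b/d$ is exactly this ODE comparison made explicit, and it lets you read off $\lim N(r)=d$ by a direct quotient rather than by contradiction.

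Second, for the interaction term the paper extracts only $\liminf_{r\to\infty}\phi(r)/r^{2d}=0$ from \eqref{eq:remainder}, and then uses that both $\widehat E/r^{2d}$ and $E/r^{2d}$ already have limits to upgrade this to equality of the two limits. Your observation that $\phi$ is nondecreasing, so that $\phi(r)/r^{2d}\leq C\int_r^{2r}\phi(s)s^{-2d-1}\,ds\to 0$, gives the full pointwise limit $\phi(r)/r^{2d}\to 0$ in one stroke; this is a cleaner way to pass from the integral bound to the pointwise asymptotic and does not rely on knowing in advance that $E/r^{2d}$ converges.

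In short: same ingredients, but your route identifies all the limits constructively (via l'H\^opital and the monotonicity of $\phi$), whereas the paper assembles existence of the separate limits first and then argues by contradiction that $\lim N(r)$ cannot fall below $d$. Your version is slightly more self-contained; the paper's version leans more on Corollary~\ref{existencelimitH}.
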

\begin{proof}
Note that \eqref{eq:hatEhaslimit} is a straightforward consequence of the monotonicity formula \eqref{eq:Emonotone}. To prove \eqref{eq:Ehaslimit}, we first notice that
\[\lim_{r\to+\infty}\dfrac{E(r)}{r^{2d}}=\lim_{r\to+\infty}N(r)\dfrac{H(r)}{r^{2d}}.\]
So the limit of $E(r)/r^{2d}$ exists finite. Now we use \eqref{eq:remainder}
\[
\int_{0}^{+\infty} \dfrac{2\int_{B_s}\sum_{i<j}^ku_i^2u_j^2}{\int_{\partial B_s}\sum_1^ku_i^2}ds<+\infty
\]
and we infer
\[\liminf_{r\to+\infty}\dfrac{r\int_{B_{r}}\sum_{i<j}^ku_i^2u_j^2}{\int_{\partial B_{r}}\sum_1^ku_i^2}=0.\]
Next, using Corollary \ref{existencelimitH} we can compute
\[\liminf_{r\to+\infty}\dfrac{\int_{B_{r}}\sum_{i<j}^ku_i^2u_j^2}{r^{2d}}=
\liminf_{r\to+\infty}\dfrac{\int_{B_{r}}\sum_{i<j}^ku_i^2u_j^2}{H(r)} \dfrac{H(r)}{r^{2d}}
=0,\]
and finally
\[\liminf_{r\to +\infty}\dfrac{\widehat E(r)-E(r)}{r^{2d}}=0;.\]
Was the limit of $N(r)$ strictly less that $d$, the growth of $H(r)$ would be in contradiction with that of $E(r)$.
\end{proof}
Now we can combine  the upper and lower estimates to obtain convergence
of the approximating solutions on compact sets and complete the
proof of Theorems \ref{thm:maini}
\begin{proof}[Proof of Theorem \ref{thm:maini}.]
Let $(u_{1,R},\dots,u_{k,R})$ be a family of solutions to \eqref{eq:system} such that $N_R(R)\leq d$ and $H_R(R)= CR^{2d}$.
Since $H_R(R)=CR^{2d}$, then,
by Proposition \ref{doubling property}
 %and \ref{prop:lowerbound} ,
 %and \ref{prop:upperbound},
 we deduce that, for every fixed $1<r<R$, there holds
\[H_R(r)\geq Ce^{-d}r^{2d}\;. \]

Assume first that there holds a uniform bound for some $r>1$,
\begin{equation}\label{eq:boundonH}
H_R(r)\leq C\;.
\end{equation}

 Then $H_R(r)$ and $E_R(r)$ are uniformly bounded on $R$. This implies a uniform bound on
  the $H^1(B_{r})$ norm. As the components are subharmonic, standard elliptic estimates (Harnack inequality) yield actually a $\mathcal C^2$ bound on $B_{r/2}$, which is independent on $R$. Note that, by Proposition \ref{prop:lowerbound}, $H_R(r)$ is bounded away from zero, so the weak limit cannot be zero. By the doubling Property \ref{doubling property} the uniform bound on $H_R(r_2)\leq C r_2^{2d}$ holds for every $r_2\in\mathbb R$ larger than $r$. Thus, a  diagonal procedure yields existence of a nontrivial limit solution of the differential system, defined on the whole of $\mathbb C$. It is worthwhile noticing that this solution inherits all the symmetries of the approximating solutions together with the upper bound on the Almgren's quotient.
Finally, from Corollary \ref{existencelimitH} and \ref{existencelimitE}  infer the limit
\begin{equation}\label{right growth rate}
\lim_{r\to +\infty}\dfrac{H(r)}{r^{2d}}=\lim_{r\to
+\infty}\dfrac{1}{N(r)},
\lim_{r\to
+\infty}\dfrac{E(r)}{r^{2d}}=\dfrac bd\in(0,+\infty)\:.
\end{equation}

Let us now show that $H_R (r)$ is uniformly bounded with respect to $R$ for fixed $r$. We argue by contradiction and assume that, for a sequence $R_n\to+\infty$, there holds
\begin{equation}\label{eq:Hunbounded}
\lim_{n\to+\infty}H_{R_n}(r)=+\infty\;.
\end{equation}
Denote $u_{i,n}=u_{i,R_n}$ and $H_n$,  $E_n$, $N_n$ the corresponding functions. Note that, as $E_n$ is bounded, we must have $N_n(r)\to 0$. For each $n$, let $\lambda_n\in(0,r)$ such that
\[\lambda^2_nH_n(\lambda_n)=1\;\]
(such $\lambda_n$ exist right because of \eqref{eq:Hunbounded}) and scale
\[\tilde u_{i,n}(z)=\lambda_n u_{i,n}(\lambda_n z)\;, \quad |z|<R_n/\lambda_n\;.\]
Note that the $(\tilde u_{i,n})_i$ still solve system \eqref{eq:system} on the disk $B(0,R_n/\lambda_n)$ and enjoy all the symmetries \eqref{eqn2_i}. Let us denote $\tilde  H_n$, $\tilde E_n$, $\tilde N_n$ the corresponding quantities. We have
\[\begin{aligned}
\tilde  H_n(1)&=\lambda^2_nH_n(\lambda_n)=1,  \\
\tilde E_n(1)&=\lambda^2_nE_n(\lambda_n) \to 0 \\
\tilde N_n(1)&=N_n(\lambda_n)\to 0
\end{aligned}\]
In addition there holds $\tilde N_n(s)\leq d$ for $s<R_n/\lambda_n$. By the compactness argument exposed above, we can extract a subsequence converging in the compact-open topology of $\mathcal C^2$ to a nontrivial symmetric solution of \eqref{eq:system} with Almgren quotient  vanishing constantly. Thus, such solution should be a nonzero constant in each component, but  constant solution are not compatible with the system of PDE's  \eqref{eq:system} .
\end{proof}

\section{Asymptotics at infinity}
\numberwithin{equation}{section}
 \setcounter{equation}{0}
 We  now come to the proof of Theorem \ref{thm asymptotics at
infinity}. Note that by Proposition \ref{doubling property}, the
condition on $N(r)$ implies that $u$ and $v$ have a polynomial
growth. (In fact, with more effort we can show the reverse also holds. Namely, if $u$ and $v$ have polynomial growth, then $N(r)$ approaches a positive integer as $r\to +\infty$. We leave out the proof.)

\medskip

 Recall the blow down
sequence is defined by
$$(u_R(x), v_R(x)):=(\frac{1}{L(R)}u(Rx),\frac{1}{L(R)}v(Rx)),$$
where $L(R)$ is chosen so that
\begin{equation}\label{normalization condition}
\int_{\partial B_1(0)}u_R^2+v_R^2=\int_{\partial B_1(0)}\Phi^2.
\end{equation}
For the solutions in Theorem \ref{main result}, by \eqref{right
growth rate}, we have
\begin{equation}\label{eq:L(R)}
L(R)\sim R^d.
\end{equation}
\par
 We
will now analyze  the limit of $(u_R,v_R)$ as $R\rightarrow+\infty$.
%In
%the proof, we will also obtain (5) in Theorem \ref{main result}.
\par
Because for any $r\in(0,+\infty)$, $N(r)\leq d$, $(u, v)$ satisfies
Proposition \ref{doubling property} for any $r\in(1,+\infty)$. After
rescaling, we see that Proposition \ref{doubling property} holds for
$(u_R,v_R)$ as well. Hence, there exists a constant $C>0$, such that for
any $R$ and $r\in(1,+\infty)$,
\begin{equation}\label{4.3}
\int_{\partial B_r(0)}u_R^2+v_R^2\leq  C e^{d}r^d.
\end{equation}
Next, $(u_R,v_R)$ satisfies the equation
\begin{equation}\label{4.4}
\left\{ \begin{aligned}
 &\Delta u_R=L(R)^2R^2u_Rv_R^2,\\
 &\Delta v_R=L(R)^2R^2v_Ru_R^2,\\
 &u_R,v_R>0~~\mbox{in}~~\mathbb{R}^2.
                          \end{aligned} \right.
\end{equation}
Here we need to observe that, by \eqref{eq:L(R)},
$$\lim\limits_{R\rightarrow+\infty}L(R)^2R^2=+\infty.$$

By \eqref{4.3}, as $R\rightarrow+\infty$, $u_R$ and $v_R$ are
uniformly bounded on any compact set of $\mathbb{R}^2$. Then by the
main result in \cite{DWZ2011}, \cite{NTTV} and \cite{TT2011}, there is a harmonic
function $\Psi$ defined in $\mathbb{R}^2$, such that (a subsequence
of) $(u_R,v_R)\rightarrow(\Psi^+,\Psi^-)$ in $H^1$ and in H\"older spaces on any compact
set of $\mathbb{R}^2$. By \eqref{normalization condition},
$$\int_{\partial B_1(0)}\Psi^2=\int_{\partial B_1(0)}\Phi^2,$$
so $\Psi$ is nonzero. Because $L(R)\rightarrow+\infty$, $u_R(0)$ and
$v_R(0)$ goes to $0$, hence
\begin{equation}\label{4.6}
\Psi(0)=0.
\end{equation}
\par
After rescaling in Proposition \ref{monotonocity}, we obtain a
corresponding monotonicity formula for $(u_R,v_R)$,
$$N(r;u_R,v_R):=\frac{r\int_{B_r(0)}|\nabla u_R|^2+|\nabla v_R|^2+L(R)^2R^2u_R^2v_R^2}
{\int_{\partial B_r(0)}u_R^2+v_R^2}=N(Rr)$$ is nondecreasing in $r$.
By (4) in Theorem \ref{main result} and from Corollary \ref{existencelimitE},
\begin{equation}\label{4.7}
N(r;u_R,v_R)\leq d=\lim_{r\to+\infty} N(r;u_R,v_R)\;\;, \forall\; r\in(0,+\infty).
\end{equation}
In \cite{DWZ2011}, it's also proved that
$(u_R,v_R)\rightarrow(\Psi^+,\Psi^-)$ in $H^1_{loc}$ and for any
$r<+\infty$,
$$\lim\limits_{R\rightarrow+\infty}\int_{B_r(0)}L(R)^2R^2u_R^2v_R^2=0.$$
After letting $R\rightarrow+\infty$ in \eqref{4.7}, we get
\begin{equation}\label{convergence of degree}
N(r;\Psi):=\frac{r\int_{B_r(0)}|\nabla \Psi|^2} {\int_{\partial
B_r(0)}\Psi^2}=\lim\limits_{R\rightarrow+\infty}N(r;u_R,v_R)=\lim\limits_{R\rightarrow+\infty}N(Rr)=
d.
\end{equation}
 In
particular, $N(r;\Psi)$ is a constant for all $r\in(0,+\infty)$. So $\Psi$ is a homogeneous polynomial of degree $d$. Actually the number
 $d$ is the vanishing order of $\Psi$ at $0$, which must therefore be a positive integer.
Now it remains  to prove that $\Psi\equiv\Phi$: this is easily done by exploiting the symmetry conditions on $\Psi$ (point $(3)$ of Theorem \ref{main result}).

\bigskip

\noindent {\bf Acknowledgment.} Part of this work was carried out while Henri Berestycki was visiting the
Department of Mathematics at the University of Chicago. Heá was supported
by an NSF FRG grant DMS-1065979 and by the French "Agence Nationale de la
Recherche" within the project PREFERED (ANR 08-BLAN-0313). Juncheng Wei was supported by a GRF grant
from RGC of Hong Kong. Susanna  Terracini was partially supported by the
Italian PRIN2009 grant ``Critical Point Theory and Perturbative
Methods for Nonlinear Differential Equations". Kelei Wang was
supported by the Australian Research Council.

 \addcontentsline{toc}{section}{References}


\begin{thebibliography}{50}
\small

\bibitem{acm} F. Alessio, A. Calamai, and P. Montecchiari,  Saddle-type solutions for a
class of semilinear elliptic equations, {\em Adv. Differential Equations} 12 (2007), 361-380.

\bibitem{A-C}
L. Ambrosio and X. Cabr\'{e}, Entire solutions of semilinear
elliptic equations in
 $\bold R^3$ and a conjecture of De Giorgi. {\em J. Amer. Math. Soc.} 13 (2000), no. 4,
 725-739.

 \bibitem{A-G-M}
A. Arapostathis, M. Ghosh and S. Marcus, Harnack's inequality for
cooperative weakly coupled elliptic systems. {\em Comm. Partial
Differential Equations} 24 (1999), no. 9-10, 1555-1571.

\bibitem{AC} P. Ao, S.T. Chui,  Binary Bose-Einstein condensate mixtures in weakly and strongly segregated phases, {\em Phys. Rev. A } 58 (1998) 4836-4840.

\bibitem{B} W. Bao,  Ground states and dynamics of multi-component Bose-Einstein condensates, {\em SIAM MMS } 2 (2004) 210-236.

\bibitem{BaD} W. Bao and Q. Du,  Computing the ground state solution of Bose-Einstein condensates by a normalized
gradient flow, {\em SIAM J. Sci. Comput.}  25 (2004) 1674-1697.

\bibitem{BCN} H. Berestycki, L. Caffarelli and L. Nirenberg, Further qualitative properties for elliptic equations in unbounded domains, {\em Ann. Scuola Norm. Sup. Pisa Cl. Sci.} (4) 25 (1997), 69-94.


\bibitem{blwz} H. Berestycki, TC Lin, JC Wei and C. Zhao, On Phase-Separation Model: Asymptotics and Qualitative Properties, {\em preprint 2009}.

\bibitem{BS} J. Busca and B. Sirakov, Harnack type estimates for nonlinear elliptic systems and applications, {\em Ann. I. H. Poincare-AN} 21(2004),  543-590.


\bibitem{cabre} X. Cabr\'e and  J. Terra, Saddle-shaped solutions of bistable diffusion equations in all of $R^{2m}$. {\em Jour. of the European Math. Society } 11, no. 4,  (2009), 819-843.

\bibitem{C-K-L}
  L. A. Caffarelli, A. L. Karakhanyan and F. Lin, The geometry of
solutions to a segregation problem for non-divergence systems, {\it
J. Fixed Point Theory Appl.}  5 (2009), no.2, 319-351.


\bibitem{C-L 2}
 L. A. Caffarelli and F. Lin, Singularly perturbed elliptic
systems and multi-valued harmonic functions with free boundaries,
{\it Journal of the American Mathematical Society}  21(2008),
847-862.

\bibitem{CLLL} S.M. Chang, C.S. Lin, T.C. Lin and W.W. Lin,
  Segregated nodal domains of two-dimensional multispecies Bose-Einstein condensates, {\em Phys. D } 196 (2004), no. 3-4, 341-361.

\bibitem{cl} W.Chen and C.Li, Classification of solutions of some nonlinear elliptic solutions, {\em Duke Math.J.} 63(3) (1991),615-622.


\bibitem{C-T-V 3}
 M. Conti, S. Terracini and G. Verzini, Asymptotic estimates for
the spatial segregation of competitive systems, {\it Adv. Math.}
195(2005), no. 2, 524-560.


\bibitem{DWZ2011}E.N. Dancer, K. Wang and Z. Zhang, The limit equation for the Gross-Pitaevskii equations and S. Terracini's conjecture, {\em preprint 2011.}

\bibitem{dfp} H. Dang, P.C. Fife and L.A. Peletier, Saddle solutions of the bistable di?usion equation,  {\em Z. Angew. Math. Phys.} 43, no. 6, (1992), 984-998.

\bibitem{dkw} M. del Pino, M. Kowalczyk and J. Wei,  On de Giorgi conjecture in dimension $n\geq 9$, {\em Annals of Mathematics} 174 (2011), no.3, 1485-1569.

\bibitem{dkpw} M. del Pino, M. Kowalczyk,  F. Pacard, J. Wei,  Multiple-end  solutions to the Allen-Cahn equation in $\R^2$.
{\em J. Funct. Anal.} 258 (2010), no. 2, 458û-503.


\bibitem{GG} N. Ghoussoub and C. Gui, On a conjecture of De Giorgi and some related problems, {\em Math. Ann. } 311 (1998), 481-491.



\bibitem{HMEWC} D.S. Hall, M.R. Matthews, J.R. Ensher, C.E.Wieman, E.A. Cornell,  Dynamics of component separation in a binary mixture of
Bose-Einstein
condensates, {\em Phys. Rev. Lett.} 81 (1998) 1539--1542.

\bibitem{MBGCW}C.J. Myatt, E.A. Burt, R.W. Ghrist, E.A. Cornell, C.E.Wieman, Production of two overlapping Bose-Einstein condensates by sympathetic
cooling, {\em Phys. Rev. Lett.} 78 (1997) 586--589.

\bibitem{pacard} F. Pacard and J. Wei, Stable solutions of the Allen-Cahn equation in dimension $8$ and minimal cones, {\em J. Funct. Anal.} to appear.



\bibitem{NTTV}
 B. Noris, H. Tavares, S. Terracini, G. Verzini, Uniform
H\"{o}lder bounds for nonlinear Schr\"{o}dinger systems with strong
competition, {\em Comm. Pure Appl. Math.} 63 (2010), 267--302

\bibitem{savin} O. Savin,  Regularity of flat level sets in phase transitions, {\em Annals of Mathematics} (2) 169(2009), no.1, 41-78.


\bibitem{TT2011}{H. Tavares and S. Terracini}, {Regularity of the nodal set of segregated critical configurations under a weak reflection law}, {\em Calc. Var. PDE}, to appear (http://dx.doi.org/10.1007/s00526-011-0458-z).

\bibitem{ww} J. Wei and T. Weth,  Asymptotic behavior of solutions of planar elliptic
systems with strong competition,  {\em Nonlinearity} 21(2008), 305-317.



\end{thebibliography}
\end{document}